\documentclass[a4paper,12pt]{article}
\usepackage[utf8x]{inputenc}
\usepackage[english]{babel}
\usepackage{graphicx}
\usepackage{amscd}
\usepackage{amsfonts}
\usepackage{latexsym}
\usepackage{amssymb,amsmath}
\usepackage[usenames]{color}

\usepackage{amscd}
\usepackage{psfrag}

\usepackage{amsthm}

\theoremstyle{plain}
\newtheorem{teor}{Theorem}[section]
\newtheorem{claim}{Claim}[section]
\newtheorem{cor}[teor]{Corollary}
\newtheorem{prop}[teor]{Proposition}
\newtheorem{lemma}{Lemma}[section]

\newcommand{\THM}{\begin{teor}}
\newcommand{\REFTHM}[1]{\begin{teor}\label{#1}}
\newcommand{\ENDTHM}{\end{teor}}
\newcommand{\COR}{\begin{cor}}
\newcommand{\REFCOR}[1]{\begin{cor}\label{#1}}
\newcommand{\ENDCOR}{\end{cor}}
\newcommand{\REFCLAIM}[1]{\begin{claim}\label{#1}}
\newcommand{\ENDCLAIM}{\end{claim}}
\newcommand{\PROP}{\begin{prop}}
\newcommand{\REFPROP}[1]{\begin{prop}\label{#1}}
\newcommand{\ENDPROP}{\end{prop}}
\newcommand{\LEM}{\begin{lemma}}
\newcommand{\REFLEM}[1]{\begin{lemma}\label{#1}}
\newcommand{\ENDLEM}{\end{lemma}}
\newcommand{\REFEQN}[1]{\begin{equation}\label{#1}}
\newcommand{\ENDEQN}{\end{equation}}
\theoremstyle{definition}

  \newcommand*{\mapfromto}[3]{\hbox{\ensuremath{#1 : #2 \longrightarrow #3}}}
\def\chh{{\check{h}}}
\def\chH{{\check{H}}}
\def\chp{{\check{p}}}
\def\chP{{\check{P}}}
\def\M{\mathcal{M}}
\def\X{\mathcal{H}}
\def\F{\mathcal{F}}
\def\T{\mathcal{T}}
\def\OO{\mathcal{O}}
\def\TT{\mathbb{T}}
\def\C{\mathbb{C}}
\def\S{\mathbb{S}}
\def\Sen{{\mathbb{S}^1}}
\def\D{\mathbb{D}}
\def\Dbar{\mathbb{{\overline{D}}}}
\def\R{\mathbb{R}}
\def\P{\mathcal{P}}

\def\Z{\mathbb{Z}}

\def\H{\mathbb{H}}
\def\pp{_{\rho}}
\def\eps{\epsilon}
\def\ga{\gamma}

\def\l0{_{\lambda_0}}
\def\i0{_{\iota_0}}

\def\BV{{\operatorname{BV}}}
\def\d{{\operatorname{d}}}
\def\dx{{\operatorname{dx}}}
\def\e{{\operatorname{e}}}
\def\Om{\Omega}
\newcommand{\Par}{\operatorname{Par}}
\def\sm{\setminus}
\def\whF{{\widehat{\mathcal{F}}}}

\def\whh{{\widehat{h}}}
\def\whp{{\widehat{p}}}
\def\whx{{\widehat{x}}}

\def\whH{{\widehat{H}}}
\def\whP{{\widehat{P}}}

\def\whR{{\widehat{R}}}

\def\wth{{\widetilde{h}}}
\def\wtphi{{\widetilde{\phi}}}

\def\ITMZ{\begin{itemize}}
\def\ENDITMZ{\end{itemize}}
\def\PROOF{\begin{proof}}
\newcommand{\REFPROOF}[1]{\begin{proof}{#1}}
\def\ENDPROOF{\end{proof}}
\def\Mobius{M{\"o}bius}
\newcommand{\lemref}[1]{{Lemma~\ref{#1}}}
\newcommand{\corref}[1]{{Corollary~\ref{#1}}}

\newcommand{\propref}[1]{{Proposition~\ref{#1}}}
\title{On parabolic external maps}
\author{L. Lomonaco, C. Petersen, W. Shen}

\begin{document}

\maketitle

\begin{abstract}
We prove that any $C^{1+BV}$ degree $d\geq 2$ circle covering $h$ 
having all periodic orbits weakly expanding, is conjugate by a $C^{1+BV}$ 
diffeomorphism to a metrically expanding map. 
We use this to connect the space of parabolic external maps (coming from 
the theory of parabolic-like maps) to metrically expanding circle coverings.
\end{abstract}

\section{Introduction}

In this paper we provide a connection between the worlds of real and complex dynamics by 
proving theorems on degree $d\geq 2$ circle coverings which are interesting 
in the world of real dynamics per se and interesting in the world of complex dynamics 
through quasi-conformal surgery.
The main theorem states that any $C^{1+BV}$  degree $d\geq 2$ circle covering $h$
(where $h \in C^{1+BV}$ means $Dh$ is continuous and of bounded variation), 
all of whose periodic orbits are weakly expanding, 
is conjugate in the same smoothness class to a metrically expanding map. 
Here weakly expanding means that for any periodic point $p$ of period $s$ there exists a punctured neighborhood of $p$ on which $Dh^s(x)>1$. 
And metrically expanding means 
$Dh(x)>1$ holds everywhere, except at parabolic points. 
This theorem strengthens a theorem by Ma\~n\'e \cite{Mane} who proved the same 
conclusion holds under the stronger assumption that $h$ is $C^2$ with all periodic points hyperbolic repelling.

The real analytic version of the above theorem, which comes for the same price, 
provides a missing link between the space of parabolic external maps from 
the theory of parabolic-like maps and metrically expanding circle coverings. 
For an enlargement on the theory of parabolic-like maps 
and the role of parabolic external maps in this theory see the introduction to 
Section~\ref{parabolicexternalmaps} and the paper by the first author \cite{L}.

\section{Setting and statement of the results}
Recall that a locally diffeomorphic covering map $h:\S^1\to\S^1$ has degree $d\not=0$ 
if and only if $h(\e^{i2\pi x})$ lifts to $E(x):=\e^{i2\pi x}$ as a diffeomorphism 
$H :\R\to\R$ with $H(x+1) = H(x)+d$. 
It will be convenient to work mostly with $H$ and the induced map 
also denoted $H : \TT \to \TT:= \R/\Z$.
Denote by $\F_d$ the set of all locally diffeomorphic, 
real analytic covering maps $h:\S^1\to\S^1$ of degree $d$. 
And denote by $\Par(h)$ the set of parabolic periodic points for $h$. 
Note that for $0<d$ the multiplier of any parabolic orbit is $1$.

We denote by $\F_d^{1+\BV}$ the set of locally diffeomorphic 
covering maps $h:\S^1\to\S^1$ of degree $d>0$ with $h\in C^{1+\BV}$ (i.e. identifying $\Sen$ with $\TT:=\R/\Z$ and writing $Dh$ for the derivative of $h$, 
the function $Dh$ is continuous and of bounded variation). 
Clearly for every $d\geq1$, $\F_d\subset\F_d^{1+\BV}$ (note that we find a whole range of spaces in between such as 
$\F_d^{\,r+\eps} := \F_d^{1+\BV}\cap C^{r+\eps}$ with $r=2, 3,\ldots$ and $0<\eps\leq 1$, 
where $C^{r+\eps}$ are the maps which have an $\eps$-H{\"o}lder $r$-th derivative. 
In this context $\F_d$ is naturally denoted $\F_d^\omega$).


Consider the class  $\T_d^*\subset\F_d^{1+\BV}$ of maps $h$ 
for which for every periodic point $p$ say of period $s$, 
there is a neighborhood $U(p)$ of $p$ such that 
for all $x\in U(p)\setminus \{p\}$ : $Dh^s(x)>1$. 
This is, $h$ is a degree $d$ locally diffeomorphic covering of the circle,
with continuous derivative of bounded variation, 
and with all periodic points either repelling or parabolic-repelling. 

Finally a map $h\in \F_d^{1+\BV}$ is called {\em metrically expanding} 
if for all $x$ in $\S^1\setminus \Par(h)$, $Dh(x)>1$. 
We shall see that for such maps $\Par(h)$ is a finite set. 
We denote by $\M_d^{1+\BV}\subset \F_d^{1+\BV}$ 
the sub-class of metrically expanding maps,
$\M_d = \F_d\cap\M_d^{1+\BV}$, and by $\M_{d,1}\subset \M_d$ 
the set of real-analytic such $h$ with precisely one parabolic point.

In this paper we will prove the following result: 

\REFTHM{TdisessMd}
For each map $h\in\T_d^*$, the set $\Par(h)$ is finite 
and $h$ is conjugate to a map $\wth\in\M_d^{1+\BV}$ via a $C^{1+\text{BV}}$ diffeomorphism. 

Moreover, if $h$ is $C^{r+\eps}$ for some 
$r=2,3,\ldots,\infty$ and $0<\eps\leq 1$ or $C^\omega$
we can take the conjugacy map to be $C^{r +\eps}$ respective $C^\omega$. 
\ENDTHM
The proof of this Theorem is the core objective of Section~\ref{viceversa}, 
where it is stated as 
Corollary~\ref{mainBV}. 

A degree $d$ circle map $h : \S^1 \rightarrow \S^1$ is \textit{topologically expanding}, 
if every interval eventually expands onto the entire circle or  equivalently the map is topologically conjugate to the map $P_d(z)=z^d$.
We denote by $\T_d^{1+\BV} = 
\{ h \in \F_d^{1+\BV} | \,\,h~\textrm{is topologically expanding} \}$,
by $\T_d:=\T_d^{1+\BV}\cap\F_d$ and by $\T_{d,1}\subset \T_d$ 
the set of topologically expanding real-analytic maps with $\Par(h)$ a singleton.
Theorem \ref{TdisessMd} implies the following:
\REFCOR{TdstarequalsTdoneplusBV}
We have $\T_d^*=\T_d^{1+\BV}$ and 
for all $h\in\M_d^{1+\BV}$ the set $\Par(h)$ is a finite set.
\ENDCOR

\noindent Denote by $\tau$ the reflection, 
both in $\Sen$ ($\tau(z) := 1/\overline{z}$) and in $\R$ ($\tau(z) = \overline{z}$).

\noindent A  \textit{parabolic external map} is a map $h\in\F_d$ with the following properties:
\ITMZ
\item $h: \S^1 \rightarrow \S^1$ is a degree $d\ge 2$ real-analytic covering of the unit circle, with a finite set $Par(h)$ of parabolic points $p$
of multiplier $1$, 
\item the map $h$ extends to a holomorphic covering map $h : W' \to W$ of degree $d$, 
where $W'$, $W$ are reflection symmetric annular neighborhoods of $\Sen$. 
We write $W_+:= W\sm\Dbar$, 
and $W'_+ := W'\sm\Dbar$, 
\item for each $p \in Par(h)$ there exists a \emph{dividing arc} $\ga_p$ satisfying :
\begin{itemize}
\item 
$p\in\gamma_p \subset \overline W\sm\D$ and $\gamma_p$ is smooth except at $p$, 
\item
$\gamma_p \cap \gamma_{p'} = \emptyset$ for $p \neq p'$, 
\item $h:\gamma_p \cap W' \rightarrow \gamma_{h(p)}$ is a diffeomorphism,
\item $\ga_p$ divides $W$ and $W'$ into $\Omega_p,\,\Delta_p$ and $\Omega'_p,\,\,\Delta'_p$ respectively, all connected, and such that 
 $h: \Delta'_p \rightarrow \Delta_{h(p)}$ is an isomorphism and 
 $\D\cup\Omega'_p \subset \D\cup\Omega_p$,
 \item calling $\Omega= \bigcap_p \Omega_p$ and $\Omega'= \bigcap_p \Omega'_p$, we have $\Omega'\cup\D \subset \subset W\cup\D$.
\end{itemize}
\ENDITMZ
We denote by $\P_d\subset \F_d$ the set of parabolic external maps, and by $\P_{d,1}\subset\P_d$ the set of $h\in \P_d$ for which $Par(h)$ 
is a singleton $z_0$. 
To emphasize the geometric properties of maps $h\in\P_d$ we shall also write 
$(h, W', W, \ga)$ for such maps, where $\gamma= \bigcup_{p} \gamma_p$, 
though neither the domain, range or dividing arcs are unique or in any way canonical. 
An external map for any parabolic-like map belongs to $\P_d$ (see Section \ref{extmap}). Note that the set $\P_d$ is invariant under conjugacy by a real analytic diffeomorphism
this is, for any $h\in\P_d$ and $\phi\in\F_1$ : $\phi\circ h\circ\phi^{-1}\in\P_d$. 
It is easy to see that $\P_d\subset\T_d$ (see \propref{PdinTdstarcapFd}). 
In particular $\P_{d,1}\subset\T_{d,1}$ and so any two maps $h_1, h_2 \in \P_{d,1}$ 
are topologically conjugate by a unique orientation preserving homeomorphism 
sending parabolic point to parabolic point.

Set $h_d(z)= \frac{z^d+(d-1)/(d+1)}{(d-1)z^d/(d+1)+1}$,
and define $\X_{d,1} =\{ h \in \F_d | \,\,h \sim_{qs} h_d \}$
(where $h \sim_{qs} h_d $ means that $h$ is quasi-symmetrically conjugate to the map $h_d$).
It is rather easy to see that $h_d\in\P_{d,1}$, 
see Lemma~\ref{hdinPd}. 
Moreover clearly $h_d\in\T_d$ so that $\X_{d,1}\subseteq\T_{d,1}$. 
In Section \ref{proof} we prove:

\REFPROP{topgivesqs}
Suppose $h_1, h_2\in\P_d$ are topologically conjugate by an orientation preserving 
homeomorphism $\phi$, which preserves parabolic points. 
Then $\phi$ is quasi-symmetric. 
In particular $\P_{d,1}\subseteq \X_{d,1}\subseteq\T_{d,1}$.
\ENDPROP
Let $\whF_d := \F_d/\F_1$ denote the set of conjugacy classes of maps in $\F_d$ 
under real analytic diffeomorphisms, 
and call $\pi_d: \F_d \rightarrow \whF_d$ the natural projection. 
As a consequence of the above we have (see also page~\pageref{proofofth}):
\REFTHM{th}
For every $d\geq 2$ we have 
$$
\M_d\subset\P_d=\T_d\qquad\textrm{and}\qquad\M_{d,1}\subset\P_{d,1}=\X_{d,1}=\T_{d,1}
$$
Moreover 
\begin{align*}
\pi_d(\M_d)=\pi_d(\P_d)&=\pi_d(\T_d)\qquad\textrm{and}\\
\pi_d(\M_{d,1})=\pi_d(\P_{d,1})&=\pi_d(\X_{d,1})=\pi_d(\T_{d,1}).
\end{align*}
\ENDTHM

\textbf{Aknowledgment.} 
The first author would like to thank the Fapesp for support via the process 2013/20480-7.
\section{Topologically expanding self coverings of the unit circle}\label{viceversa}
Recall that for each integer $d\ge 2$, the set $\mathcal{T}_d^*$ 
denotes the collection of all orientation preserving covering maps 
$h: \S^1\to \S^1$ with the following properties:
\begin{enumerate}
\item $h$ has degree $d$;
\item $h$ is a $C^1$ local diffeomorphism and the derivative $Dh$ has bounded variation;
\item If $p$ is a periodic point of $h$ with period $s$, then there is a neighborhood $U(p)$ of $p$ such that
 $Dh^s(x)>1$ holds for all $x\in U(p)\setminus \{p\}$.
\end{enumerate}

\begin{teor}\label{thm:circlemap}
For each map $h\in\mathcal{T}_d^*$, $\Par(h)$ is finite.
Moreover,
there exists a positive integer $N$ and a real analytic function $\rho: \S^1 \to \R^+$ such that
$$|Dh^N(x)|_\rho:=\frac{\rho(h^N(x))}{\rho(x)} Dh^N(x) >1$$ holds for all $x\in \S^1\setminus \Par(h)$.
\end{teor}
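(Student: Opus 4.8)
\textbf{Proof proposal for Theorem~\ref{thm:circlemap}.}

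The plan is to reduce to a finite combinatorial problem and then build the Riemannian metric $\rho$ by hand near parabolic points and by a compactness/averaging argument away from them. First I would establish that $\Par(h)$ is finite: every parabolic periodic point is isolated in the set of periodic points by property (3) applied at such a point together with the fact that a $C^{1+\BV}$ map of degree $d\ge 2$ is (by property (3) at \emph{every} periodic point) topologically expanding — hence topologically conjugate to $P_d(z)=z^d$, whose periodic points of a given period are finite in number. Since under this topological conjugacy parabolic cycles correspond to finitely many periodic cycles of $P_d$ of each period, and since $Dh^s$ is continuous of bounded variation so cannot equal $1$ on an infinite discrete set accumulating somewhere without violating the weak-expansion hypothesis (a point of accumulation of parabolic points would be a non-isolated periodic point, contradicting (3)), one gets finiteness. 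A cleaner route: show directly that the non-wandering dynamics is expanding in the Ma\~n\'e sense away from the finitely many parabolic cycles, using that $h$ has no hyperbolic \emph{attracting} or \emph{neutral-from-both-sides} cycles by (3); I expect to cite or re-run Ma\~n\'e's argument \cite{Mane} here, adapting it from $C^2$ to $C^{1+\BV}$, the point being that bounded variation of $Dh$ is exactly enough to run the distortion estimates that Ma\~n\'e's proof needs.

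Next, with $\Par(h)=\{p_1,\dots,p_k\}$ finite and each $p_j$ fixed by $h^N$ for a suitable common period $N$, I would analyze the local behavior of $H:=h^N$ at each $p_j$. Since $p_j$ is parabolic-repelling with multiplier $1$ and $DH$ is of bounded variation, one can write, in a local coordinate centered at $p_j$, $DH(x) = 1 + \psi(x)$ with $\psi$ of bounded variation, $\psi(x)>0$ for $x\ne 0$ near $0$. The key local computation is to choose a positive weight $\rho$ near each $p_j$ of the form $\rho(x) = |x|^{-\alpha}$ or more robustly $\rho(x)=\exp(-\beta \int_0^x \text{(something)})$ so that the multiplicative cocycle $\rho(H(x))/\rho(x)\cdot DH(x)$ is $>1$ for $x\ne 0$ in a punctured neighborhood; near a parabolic point $H(x)=x+a x^{m+1}+\cdots$ typically, and the standard trick is that the metric $|dx|/|x|$ (i.e. passing to the coordinate $\log|x|$) turns the parabolic fixed point into a point at infinity where $H$ becomes a translation-like map with positive drift, which is "expanding" in that metric. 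I would make $\rho$ real analytic near $p_j$ by taking it to be an explicit real-analytic function (e.g. built from $1/(c - \cos)$ type bumps, or simply a power of a real-analytic local defining function) — this is where the real-analytic conclusion in the statement is used, and it costs nothing since we get to choose $\rho$.

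Then I would globalize. Cover $\S^1$ by: small neighborhoods $V_j$ of the $p_j$ where the local metrics $\rho_j$ just constructed make $|DH^{n_j}|_{\rho_j}>1$ off $p_j$ for some local return time $n_j$; and the compact complement $K=\S^1\setminus\bigcup V_j$, on which, by Ma\~n\'e's theorem (or the expansion established in step one), there are constants $C>0$, $\lambda>1$ and an integer $M$ with $Dh^M(x)\ge C\lambda^{n}$ along any orbit segment of length $n$ staying in a neighborhood of $K$; equivalently the Lyapunov exponent of every invariant measure not supported on $\Par(h)$ is positive. The standard device to turn "eventual expansion" into "expansion in one step in a metric" is to set, for a large integer $N$, $\rho(x) = \sum_{i=0}^{N-1} \lambda^{-i}\, (\text{pullback of a background metric under } h^i)$ — i.e. a telescoping/Mather-type sum — which makes $|Dh^N|_\rho >1$ on all of $K$ at once; one then patches this with the $\rho_j$ using a partition of unity, checking that on the overlap regions (which are bounded away from the $p_j$, hence where $Dh$ is already strictly $>1$ in the Euclidean metric) the patched metric still works after possibly enlarging $N$. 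Finally one smooths the patched $\rho$ to a real-analytic positive function by a small perturbation, using that the inequality $|Dh^N(x)|_\rho>1$ is open and that $\S^1\setminus\Par(h)$, while noncompact, has the property that the inequality degrades in a controlled way near $\Par(h)$ where the analytic local model already guarantees it strictly.

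\textbf{Main obstacle.} The genuinely delicate point is the interface between the local parabolic metric and the global hyperbolic metric: near $p_j$ the natural expanding metric blows up (it must, since $Dh(p_j)=1$), so one cannot simply add it to a bounded background metric — one has to check that the \emph{product} cocycle, not the individual pieces, stays $>1$ through the transition annuli, and that the common iterate $N$ can be chosen uniformly for all $p_j$ and for $K$ simultaneously. Equivalently: one must control the "waiting time" an orbit spends drifting away from a parabolic point before it re-enters the uniformly hyperbolic region, and ensure the accumulated expansion in the patched metric over exactly $N$ steps beats $1$ regardless of where in this transition the orbit segment sits. Upgrading Ma\~n\'e's $C^2$ distortion bounds to the $C^{1+\BV}$ setting is a secondary technical hurdle but, as in Ma\~n\'e's proof, bounded variation of $Dh$ is precisely the hypothesis that makes the Koebe-type distortion control go through.
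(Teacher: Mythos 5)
Your approach has a real gap at its center, and it is exactly the point you flag as the ``genuinely delicate'' one. You conceive the local metric near a parabolic point $p_j$ as a metric that ``blows up'' (e.g.\ $|dx|/|x|$ or $|x|^{-\alpha}$), on the grounds that $Dh(p_j)=1$ forces unboundedness. But the theorem asks for a real-analytic function $\rho:\S^1\to\R^+$ on the compact circle, hence $\rho$ must be bounded above and below; an unbounded local model cannot be patched into such a $\rho$, and the partition-of-unity step you propose cannot repair that. The intuition is inverted: one does not need \emph{uniform} expansion, only strict expansion off $\Par(h)$, which degenerates to equality at the parabolic points. The paper's $\rho$ is a bounded analytic function with a \emph{strict local minimum} at each $p\in\Par(h)$ (normalized so $\rho(p)=1$, $\rho'(p)=0$, $\rho''(p)>0$). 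Then for $x$ near $p$ one has $\rho(h(x))>\rho(x)$ because $h$ pushes points away from $p$ and $\rho$ is increasing away from $p$, while $Dh(x)>1$ already, so the cocycle is $>1$ without any blow-up. This is property (A1) in the paper, and it is the conceptual key your outline is missing.

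Two secondary issues. First, your finiteness argument is circular as written: topological expansion (equivalently conjugacy to $z^d$) is a \emph{consequence} of the theorem (Corollary~\ref{TdstarequalsTdoneplusBV}), not a hypothesis available at the start; the paper instead proves finiteness directly (Lemma~\ref{highper}) by a distortion estimate showing that any periodic point of sufficiently high period has $Dh^s(p)\ge K>1$, forcing all parabolic cycles to have bounded period, and there are finitely many periodic points of each period. Second, the telescoping/Mather sum $\sum \lambda^{-i}(\text{pullback})$ you propose for globalization corresponds to what the paper does in the proof of Corollary~\ref{mainBV} (the $\rho_*$-trick converting ``$|Dh^N|_\rho>1$'' into ``$|Dh|_{\rho_*}>1$''), not to the construction of $\rho$ itself; for the theorem one needs instead the combination of (A1) with a condition (A2) controlling orbits that eventually land exactly on $\Par(h)$, plus the compactness argument split into orbits hitting $\Par(h)$ and orbits that do not (using parts (c), (d) of Proposition~\ref{prop:expanding}). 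Finally, ``smooth to real analytic by small perturbation'' is more delicate than it sounds, since the perturbation must preserve the location of the critical points of $\rho$ at $\Par(h)$; the paper handles this via the explicit Fourier-truncation-plus-reparametrization device of Lemma~\ref{adjustment}.
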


In particular, the theorem claims that a map $h\in\mathcal{T}_d^*$ without neutral cycles is uniformly expanding on the whole phase space $\S^1$, a result proved by Ma\~n\'e \cite{Mane} under a stronger assumption that $h$ is $C^2$. Some partial result on the validity of Ma\~n\'e's theorem under the $C^{1+\text{BV}}$ condition was obtained in \cite{MJ}.

Recall that a map $h\in \mathcal{T}_d^*$ is called {\em metrically expanding} if $Dh(x)>1$ holds for $x\in \S^1\setminus \Par(h)$. 

\REFCOR{mainBV} Each map $h\in\mathcal{T}_d^*$ is conjugate to a metrically expanding map via a $C^{1+\text{BV}}$ diffeomorphism. Moreover, if $h$ is $C^{r+\eps}$, 
$r=2,3,\ldots,\infty, \omega$, $0<\eps\leq 1$ 
we can take the conjugacy map to be $C^{r+\eps}$.
\ENDCOR
\begin{proof}
Let $\rho$ and $N$ be given by Theorem~\ref{thm:circlemap} and set
$$\rho_*(x)=\sum_{j=0}^{N-1} \rho(h^j(x)) Dh^j(x)$$
which is a continuous function with bounded variation. 
Then a computation shows that
$$|Dh(x)|_{\rho_*}=Dh(x)\cdot \frac{\rho_*(h(x))}{\rho_*(x)}=
 \frac{Dh^N(x) \rho(h^N(x))+ \sum_{j=1}^{N-1}Dh^j(x)\rho(h^j(x))}{\rho(x) + \sum_{j=1}^{N-1}Dh^j(x)\rho(h^j(x))} $$
which is strictly greater than one for $x\in\S^1\setminus \Par(h)$.

Let us complete the proof. Identify $\S^1$ with $\R/\Z$ via $e^{i2\pi x}\mapsto x\mod 1$.
Then the map
$$\phi(x):=\int_0^x  C \rho_*
dx= C \int_0^x \rho_* dx, \mbox{  with  }\frac{1}{C}= \int_0^1 \rho_* dx$$
defines a $C^{1+\text{BV}}$ diffeomorphism of $\S^1$, and setting $g:= \phi \circ h \circ \phi^{-1}$, we obtain that
$Dg(\phi(x))=|Dh(x)|_{\rho_*}>1$ for all $x\in \S^1\setminus \Par(h)$.

Clearly, if $h$ is $C^r$ then so is $\phi$.
\end{proof}

The rest of the paper is devoted to the proof of Theorem~\ref{thm:circlemap}.
The condition that $Dh$ has bounded variation is used to control the distortion.
Recall that the distortion of $h$ on an interval $J \subset \S^1$ is defined as
$$Dist (h, J)= \sup_{x,y \in J} \log \frac{|Dh(x)|}{|Dh(y)|}.$$
\begin{lemma}\label{dist}
There exists a $C_0>0$ such that, for any interval $J \subset \S^1$ and $n\ge 1$, if
$J,\,h(J),..,h^{n-1}(J)$ are intervals with pairwise disjoint interiors, then
$$Dist (h^n , J ) \leq C_0. $$
\end{lemma}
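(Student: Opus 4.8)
The plan is to prove the bounded-distortion lemma by the classical ``disjointness kills distortion'' argument, exploiting that $Dh$ is continuous of bounded variation and bounded away from $0$ and $\infty$ on the compact circle. Write $V$ for the total variation of $\log Dh$ on $\S^1$ (finite since $Dh$ is continuous, positive, and $BV$, so $\log Dh$ is $BV$ too). First I would note that for a single interval $J$, $\mathrm{Dist}(h,J)=\sup_{x,y\in J}|\log Dh(x)-\log Dh(y)|$ is at most the variation of $\log Dh$ over $J$, call it $\mathrm{var}_J(\log Dh)$. Then by the chain rule,
\[
\log Dh^n(x)=\sum_{k=0}^{n-1}\log Dh\bigl(h^k(x)\bigr),
\]
so for $x,y\in J$,
\[
\bigl|\log Dh^n(x)-\log Dh^n(y)\bigr|\le\sum_{k=0}^{n-1}\bigl|\log Dh(h^k(x))-\log Dh(h^k(y))\bigr|\le\sum_{k=0}^{n-1}\mathrm{var}_{h^k(J)}(\log Dh).
\]

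The key point is the disjointness hypothesis: the intervals $J,h(J),\dots,h^{n-1}(J)$ have pairwise disjoint interiors, so they form (the interiors of) a family of at most countably — in fact here finitely — many essentially disjoint subintervals of $\S^1$. Since the variation of a $BV$ function is (countably) additive over a collection of non-overlapping intervals — i.e.\ $\sum_i \mathrm{var}_{J_i}(f)\le \mathrm{var}_{\S^1}(f)$ whenever the $J_i$ have pairwise disjoint interiors — we get
\[
\sum_{k=0}^{n-1}\mathrm{var}_{h^k(J)}(\log Dh)\le \mathrm{var}_{\S^1}(\log Dh)=V.
\]
Hence $\mathrm{Dist}(h^n,J)\le V=:C_0$, uniformly in $J$ and $n$. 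I would take a sentence to justify that $\log Dh$ is genuinely $BV$: $Dh$ is continuous on the compact set $\S^1$ and never zero (local diffeomorphism), so $0<m\le Dh\le M<\infty$, and composing a $BV$ function with the Lipschitz function $\log$ on $[m,M]$ keeps it $BV$, with $\mathrm{var}(\log Dh)\le \frac1m\,\mathrm{var}(Dh)$.

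There is no serious obstacle here; the only point requiring a little care is the sub-additivity of total variation over a family of intervals with pairwise disjoint interiors, when those intervals may share endpoints and may even be infinite in number in principle — though under the standing hypotheses the orbit $J,h(J),\dots$ consists of finitely many intervals, so one is really just summing variations over finitely many non-overlapping arcs, which is bounded by the variation over the whole circle by definition of total variation via partitions. I would also remark that the constant $C_0$ depends only on $h$ (through $V$), not on the combinatorics, which is exactly what is needed for the distortion control used later in the proof of Theorem~\ref{thm:circlemap}.
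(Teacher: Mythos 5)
Your argument is essentially the same as the paper's: split $\log Dh^n$ by the chain rule, bound each term by the variation of $\log Dh$ over $h^i(J)$, and use the disjointness hypothesis to sum these variations against the total variation of $\log Dh$ on $\S^1$. The only difference is that you spell out explicitly why $\log Dh$ is $BV$, which the paper leaves implicit.
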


\begin{proof}
Since $h$ is a $C^1$ covering and $Dh$ has bounded variation, $\log Dh$ also has bounded variation. For each $x,y\in J$,
$$\left|\log \frac{Dh^n(x)}{Dh^n(y)}\right|\le \sum_{i=0}^{n-1} \left|\log \frac{Dh(h^i(x))}{Dh(h^i(y))}\right|
\le \sum_{i=0}^{n-1} \text{Var}(\log Dh, h^i(J))$$
is bounded from above by the total variation of $\log Dh$.
\end{proof}

\subsection{Proof of Theorem~\ref{thm:circlemap}}
The main step is to prove that a map $h\in\mathcal{T}_d^*$ has the following expanding properties.
\begin{prop}\label{prop:expanding}
For each $h\in\mathcal{T}_d^*$ the following properties hold:
\begin{enumerate}
\item[(a)] $\Par(h)$ is a finite set.
\item[(b)] There exists a constant $K_0>0$ such that $Dh^k(x)\ge K_0$ holds for each $x\in \S^1$ and $k\ge 1$.
\item[(c)] For each $x\not\in \bigcup_{k=0}^\infty h^{-k}(\Par(h))$, $Dh^n(x)\to\infty$ as $n\to\infty$.
\item[(d)]  Let $p$ be a fixed point and let $\delta_0>0$, $K>0$ be constants. 
Then there exists $\delta=\delta(p,\delta_0,K)>0$ such that
if $$d(x, p)<\delta\text{ and }\max_{j=1}^k d(h^{j}(x),p)\ge \delta_0,$$ then $Dh^{k}(x)\ge K.$
\item[(e)] For any $K>0$, there exists a positive integer $n_0$ such that for each $n> n_0$ and $x\in h^{-n}(\Par(h))\setminus h^{-n+1}(\Par(h))$, we have $Dh^n(x)\geq K.$
\end{enumerate}
\end{prop}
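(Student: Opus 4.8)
The plan is to establish the five items roughly in the stated order, leveraging the distortion bound of Lemma~\ref{dist} as the main technical tool throughout.

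For item (a), I would argue that if $\Par(h)$ were infinite, then since $\S^1$ is compact the parabolic points would accumulate; but a parabolic point $p$ of period $s$ is an isolated fixed point of $h^s$ with $Dh^s(p)=1$, and the weak-expansion hypothesis gives a punctured neighborhood on which $Dh^s>1$. Using that $\log Dh$ has bounded variation, one sees that an accumulation of such neighborhoods forces the total variation of $\log Dh^s$ (hence of $\log Dh$) to be infinite along appropriate orbits, or more directly: each parabolic point forces a definite ``dip-and-recover'' in $\log Dh$ across a cycle, and infinitely many disjoint such dips contradict bounded variation. A cleaner route is to note that periodic points of a given period are finite unless $h$ has an interval of fixed points (excluded by local diffeomorphism plus weak expansion), and then bound the admissible periods: a parabolic cycle cannot be too long because the product $\prod Dh$ around it is $1$ while each factor is close to $1$ only on a small set, again contradicting the BV bound if periods were unbounded.

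For item (b), the key point is that $h$ is topologically expanding-like: no wandering intervals and, by a standard argument using weak expansion at periodic points plus Lemma~\ref{dist}, every point has arbitrarily long backward orbits along which pieces stay disjoint. More concretely, I would show there is $\lambda>1$ and $n_1$ with $Dh^{n_1}(x)\ge\lambda$ outside small neighborhoods of $\Par(h)$, and handle passages near parabolic points separately: near a parabolic fixed point the derivative of $h^s$ is $\ge 1$ on one side and the orbit cannot linger (by the weak-expansion punctured-neighborhood property together with the distortion estimate), so the product of derivatives over any finite orbit segment is bounded below by a uniform constant $K_0$ coming from the finitely many ``bad'' passages. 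Item (c) then follows from (b) plus a compactness/recurrence argument: a point whose forward orbit avoids $\Par(h)$ and all its preimages either accumulates only on parabolic points — impossible, since then it would have to pass infinitely often near them while being pushed away each time, forcing $Dh^n\to\infty$ — or it spends a definite fraction of time in the uniformly expanding region, again giving $Dh^n\to\infty$.

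Item (d) is the ``escape estimate'' near a parabolic (or more generally fixed) point: if $x$ starts $\delta$-close to $p$ but the orbit reaches distance $\ge\delta_0$ by time $k$, then by choosing $\delta$ small the orbit segment up to its first escape time $m$ maps $(p-\delta,p+\delta)$-type intervals with controlled distortion, and the total expansion up to time $m$ is at least comparable to $\delta_0/\delta$, which we make $\ge K$; after escaping, apply (b). The remaining passages are again controlled by Lemma~\ref{dist}. I expect item (d) to be the main obstacle, since one must carefully track the interval containing $x$ under iteration, ensure the hypotheses of Lemma~\ref{dist} (disjoint interiors) hold for the relevant orbit segment, and combine the near-parabolic estimate — which is where the $C^{1+\BV}$ hypothesis is genuinely doing work, replacing Mañé's $C^2$ — with the global lower bound. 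Finally, item (e) follows from (d) together with a counting argument: a point $x\in h^{-n}(\Par(h))\setminus h^{-n+1}(\Par(h))$ has an orbit of length $n$ ending at a parabolic point, so for $n$ large the orbit must either stay near the (finitely many) parabolic points for a long time — but then some initial segment satisfies the hypothesis of (d) with $k$ large, giving $Dh$-product $\ge K$ — or spend enough time in the uniformly expanding region to accumulate a factor $\ge K$; in either case $Dh^n(x)\ge K$.
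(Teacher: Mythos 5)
Your proposal is too vague in several places to count as a proof, and item (a) in particular relies on an argument that does not work. You argue that infinitely many parabolic points would force infinitely many ``dip-and-recover'' patterns in $\log Dh$, contradicting the BV hypothesis. But a parabolic cycle of period $s$ only forces $\prod_{i=0}^{s-1} Dh(h^i(p)) = 1$, and this product constraint places no pointwise or variational constraint on $\log Dh$ that accumulates across distinct cycles: the individual factors $Dh(h^i(p))$ need not be close to $1$, and the set where $Dh$ is near $1$ can be a null set or even a single point. Infiniteness of $\Par(h)$ is genuinely a dynamical statement, and the paper proves it (Lemma~\ref{highper}) by showing that \emph{any} periodic point of sufficiently high period $s$ has $Dh^s(p)\ge K$, using a reference fixed point $p_0$, the fact that the components of $\S^1\setminus h^{-s}(p_0)$ shrink to zero (which invokes the no-wandering-interval theorem of \cite{MMS}), and the distortion bound of Lemma~\ref{dist}. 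Nothing in your sketch for (a) substitutes for this mechanism.

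More broadly, the paper's entire proof of the proposition is organized around \emph{nice intervals} and \emph{first return maps}, which your proposal never mentions. Lemma~\ref{2} gives, around any periodic point, an arbitrarily small nice interval whose first return map has derivative $>1$ on the central component and $\ge K$ off it; Lemma~\ref{2'} gives a uniform $\lambda_0>1$ for the first return map to any sufficiently small nice interval around a non-parabolic point. These two lemmas are what make (b) and (c) go through cleanly: for (b) one covers $\S^1$ by finitely many such nice intervals and observes that the first-return factors are all $\ge 1$, so only the finitely many transition steps between intervals contribute a bounded loss; for (c) one picks any $y\in\omega(x)\setminus\Par(h)$ (nonempty because the weak-expansion condition makes parabolic points repel on both sides) and uses the $\lambda_0$-expansion of the first return map to a nice interval around $y$. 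Your argument for (c) via ``passes near parabolics and is pushed away'' is the harder and less direct of the two possibilities to quantify, and your argument for (b) --- ``every point has arbitrarily long backward orbits along which pieces stay disjoint'' --- is a gesture at the disjointness hypothesis of Lemma~\ref{dist} rather than a proof. Your sketches for (d) and (e) are closer in spirit to the paper's (nested preimage neighborhoods $B_n$ of a parabolic fixed point, with distortion control, plus a case split in (e) on whether the orbit enters a $\delta$-neighborhood), but even there you would need the nice-interval framework to make the disjointness hypotheses of Lemma~\ref{dist} verifiable. In short: without the nice-interval/first-return-map machinery and the no-wandering-interval input, the proposal has a structural gap, and the argument for (a) is not salvageable as written.
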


Assuming the proposition for the moment, let us complete the proof of Theorem~\ref{thm:circlemap}.
Replacing $h$ by an iterate if necessary, we may assume that {\em all points in $\Par(h)$ are fixed points} (since $\Par(h)$ is finite). We say that a function $\rho:\S^1\to (0,\infty)$ is {\em admissible} if the following properties are satisfied:
\begin{enumerate}
 \item[(A1)] there is $\delta_0>0$ such that whenever $x\in B(p,\delta_0)\setminus \{p\}$ for some $p\in\Par(h)$, we have
 $\rho(h(x))>\rho(x)$;
 \item[(A2)] for any $x \in \S^1 \setminus \Par(h)$ and $s\ge 1$ with $h^s(x)\in \Par(h)$, we have
$$|Dh^s(x)|_{\rho}\geq 2. $$
\end{enumerate}

\begin{lemma} There exists a real analytic admissible function $\rho$.
\end{lemma}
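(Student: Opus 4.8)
The goal is to construct a real-analytic function $\rho:\S^1\to(0,\infty)$ satisfying (A1) and (A2). The plan is to build $\rho$ in two stages: first produce, near each fixed parabolic point $p\in\Par(h)$, a local model that forces the conformal metric to \emph{grow} under one iterate of $h$ (that handles (A1)), and then globally scale $\rho$ outside a small neighborhood of $\Par(h)$ so that the cumulative factor $|Dh^s(x)|_\rho$ accrued by an orbit segment landing in $\Par(h)$ is at least $2$ (that handles (A2)). The key tool is Proposition~\ref{prop:expanding}(e): for any $K>0$, preimages $x\in h^{-n}(\Par(h))\sm h^{-n+1}(\Par(h))$ with $n$ large have $Dh^n(x)\ge K$, so the only orbit segments landing on $\Par(h)$ for which $|Dh^s(x)|$ could be small are the finitely many ``short'' ones of length $\le n_0$; these, together with (d), are what we need to dominate by a clever choice of $\rho$.

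First I would fix, for each fixed point $p\in\Par(h)$, a small neighborhood $B(p,\delta_0)$ and choose the local piece of $\rho$. Since $p$ is parabolic-repelling with multiplier $1$, near $p$ the map $h$ has the form $h(x)=x+a(x-p)^{m+1}+\cdots$ on one side (with the appropriate sign making it repelling), and one checks that a real-analytic $\rho$ of the shape $\rho(x)=1/|x-p|^{\alpha}$ near $p$ — or rather a real-analytic function agreeing with such a profile to the needed order, since $1/|x-p|^\alpha$ is not itself analytic — makes $\rho(h(x))/\rho(x)>1$ on a punctured neighborhood, because $|h(x)-p|<|x-p|$ there is false; it is the other way: a repelling parabolic point has $|h(x)-p|>|x-p|$, so in fact $\rho(x)=1/|x-p|^\alpha$ already gives $\rho(h(x))<\rho(x)$. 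So instead I would take $\rho$ \emph{increasing} toward $p$ only mildly, or more robustly: since the set of real-analytic functions is flexible, pick any real-analytic $\rho_0>0$ on $\S^1$, and note (A1) asks only for a \emph{definite} neighborhood behavior. The cleanest route is: choose $\rho$ real-analytic and strictly positive so that near each $p$, $\log\rho$ has a strict local \emph{minimum} at $p$ of high enough order; then $\rho(h(x))>\rho(x)$ on $B(p,\delta_0)\sm\{p\}$ follows from comparing the Taylor expansions of $\log\rho\circ h$ and $\log\rho$ at $p$, using that $h$ moves points away from $p$. This gives (A1) for a suitable $\delta_0$.

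Next, for (A2): enlarge $\rho$ by a large constant factor on the complement of $\bigcup_p B(p,\delta_0/2)$, say replace $\rho$ by a real-analytic $\tilde\rho$ that equals $\rho$ on $\bigcup_p B(p,\delta_0/2)$ (where (A1) was arranged) and equals $M\rho$ far from $\Par(h)$, interpolating real-analytically with $\tilde\rho\ge\rho$ everywhere. An orbit segment $x,h(x),\dots,h^s(x)=p\in\Par(h)$ with $x\notin\Par(h)$ either (i) has $s\le n_0$, in which case there are only finitely many choices of starting data up to the position of $x$, and we must arrange $|Dh^s(x)|_{\tilde\rho}\ge2$; or (ii) has $s>n_0$, in which case $Dh^s(x)\ge K$ by (e) for our chosen $K$, and since $\tilde\rho(h^s(x))/\tilde\rho(x)=\tilde\rho(p)/\tilde\rho(x)\ge \rho(p)/(M\max\rho)$ is bounded below by a positive constant, choosing $K$ large makes $|Dh^s(x)|_{\tilde\rho}\ge2$. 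For case (i), I would use part (d) (with $p$ the parabolic limit, $\delta_0$ as above, and $K$ to be determined) to say that if $x$ starts very close to $p$ its orbit must leave $B(p,\delta_0)$ before returning, giving a large derivative, while if $x$ starts outside $B(p,\delta_0)$ the factor $\tilde\rho(p)/\tilde\rho(x)$ is controlled and $M$ large finishes it; the finitely many remaining segments are handled by taking the constant $M$ (or a bump of $\tilde\rho$ near $p$) large enough.

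The main obstacle is the simultaneous compatibility of (A1) and (A2): (A1) wants $\rho$ to be relatively small near $\Par(h)$ (so that $\rho$ increases as the orbit moves away from $p$), while (A2) wants $\rho(p)=\rho(h^s(x))$ not too small compared to $\rho(x)$ for short returning segments, i.e. pressure to keep $\rho(p)$ large. Reconciling these forces one to make $\delta_0$ small first (shrinking where (A1) must hold), then exploit that Proposition~\ref{prop:expanding}(b),(d),(e) give genuine hyperbolic-type estimates off a tiny neighborhood of the parabolic set so that a single large global constant $M$ absorbs all the finitely many bad short segments without disturbing the local picture near $\Par(h)$. Verifying that one can do the interpolation while keeping $\rho$ real-analytic and strictly positive — rather than merely $C^\infty$ — is the technical point to be careful about, but it is routine: real-analytic bump-like functions with prescribed behavior on compact sets and prescribed sign can be produced, e.g. via finite trigonometric sums or by perturbing within the analytic topology, since all the required inequalities are open.
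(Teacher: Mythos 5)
Your approach to (A1) is the same as the paper's: put a strict local minimum of $\rho$ at each parabolic fixed point, so that the outward drift of $h$ near $p$ makes $\rho\circ h>\rho$; and your use of Proposition~\ref{prop:expanding}(e) to split orbit segments landing on $\Par(h)$ into ``short'' ($s\le n_0$) and ``long'' ($s>n_0$) is also the right idea. But the (A2) step has a concrete sign error that invalidates the argument as written. Recall
$$|Dh^s(x)|_\rho=\frac{\rho(h^s(x))}{\rho(x)}\,Dh^s(x)=\frac{\rho(p)}{\rho(x)}\,Dh^s(x),$$
so with $\rho(p)$ fixed we need $\rho(x)$ to be \emph{small} at the preimages $x$ of $\Par(h)$ in order for this to be $\ge 2$. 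You propose $\tilde\rho=\rho$ near $\Par(h)$ and $\tilde\rho=M\rho$ far away with $M$ large; then for a short preimage $x$ lying outside $\bigcup_p B(p,\delta_0/2)$ one gets $\tilde\rho(p)/\tilde\rho(x)=\rho(p)/(M\rho(x))\to 0$ as $M\to\infty$, so ``$M$ large finishes it'' is backwards --- increasing $M$ makes $|Dh^s(x)|_{\tilde\rho}$ smaller, not larger. The same issue appears in your case (ii), where your bound $\tilde\rho(p)/\tilde\rho(x)\ge\rho(p)/(M\max\rho)$ degenerates with $M$; you then try to compensate by taking $K$ large, but $K$ and $n_0$ were supposed to be fixed \emph{before} $M$, and at any rate the short-segment case already fails.

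The paper avoids this by normalizing $\rho\equiv 1$ on $\Par(h)$, imposing $\rho<\rho_0/2$ on the \emph{finite} set $X_1\cup\cdots\cup X_{n_0}$ of short preimages (where $\rho_0=\min\{Dh^k(x):x\in X_k,\ k\le n_0\}$), and imposing the global bound $\rho<2$. Then for $n\le n_0$ one has $|Dh^n(x)|_\rho\ge 1\cdot \rho_0/(\rho_0/2)=2$, and for $n>n_0$ one has $|Dh^n(x)|_\rho\ge 1\cdot 4/2=2$ by (e) with $K=4$. In other words the correct move is to make $\rho$ \emph{small} at the short preimages relative to its value at $\Par(h)$ --- precisely the opposite of your global enlargement. (Two minor remarks: the set of preimages with $s\le n_0$ is literally a finite set of points, since $h$ has finite degree, not merely ``finite up to the position of $x$''; and the real-analytic approximation does require a bit more care than ``the inequalities are open,'' because one needs the critical points of the truncated Fourier sum to land \emph{exactly} on $\Par(h)$, which the paper arranges with the separate Lemma~\ref{adjustment}.)
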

\begin{proof} Let $X_0=\Par(h)$ and $X_k=h^{-k}(\Par(h))\setminus h^{-k+1}(\Par(h))$ for each $k\ge 1$. By Proposition~\ref{prop:expanding} (e), there exists $n_0$ such that $Dh^n(x)\ge 4$ holds for $x\in X_n$, $n>n_0$.
Let $\rho_0=\min\{Df^k(x): x\in X_k \text{ for some } k=1,2,\ldots, n_0\}$.
Let $\pi:\R\to \S^1$ be the universal covering $\pi(t)=e^{2\pi it}$. Let $\widehat{\rho}: \R\to (0,\infty)$ be a real analytic function of period $1$ with 
the following properties:
\begin{enumerate}
\item [(i)] $\widehat{\rho}(\hat{p})=1$, $\widehat{\rho}'(\hat{p})=0$ and $\widehat{\rho}''(\hat{p})>0$ for each $\hat{p}\in \pi^{-1}(\Par(h))$;
\item [(ii)] $\widehat{\rho}(\hat{x}) < \rho_0/2$ holds for each 
$\hat{x}\in \pi^{-1}(X_1\cup X_2\cup\cdots\cup X_{n_0})$; 
\item [(iii)] $0<\widehat{\rho}(\hat{x})<2$ for all $\widehat{x}\in \R$.
\end{enumerate}
It is easy to see that there is a smooth function $\check\rho$ satisfying all the requirements. 
To get a real analytic one, choose $\epsilon>0$ such that (ii) holds for $\check\rho$ on a $2\epsilon$-neighbourhood of $\pi^{-1}(X_1\cup X_2\cup\cdots\cup X_{n_0})$ and 
$\check\rho''(x)>0$ on a $2\epsilon$-neighbourhood of $\pi^{-1}(\Par(h))$. 
Write $\pi^{-1}(\Par(h))\cap[0,1[ := \{\hat{p}_1 < \ldots < \hat{p}_n\}$ and 
let $\delta>0$ be given by Lemma~\ref{adjustment} below. 
And let $\widetilde\rho$ be a partial sum of the Fourier series of $\check\rho$ satisfying 
$\widetilde\rho'(y_j)=0$ for some $y_j$ with $|y_j-\hat{p}_j|<\delta$ for each $j$, 
$\widetilde\rho''(x)>0$ on a $\epsilon$-neighbourhood of $\pi^{-1}(\Par(h))$ and 
$\widetilde{\rho} < \rho_0/2$ on an $\epsilon$-neighbourhood of $\pi^{-1}(X_1\cup X_2\cup\cdots\cup X_{n_0})$. 
Let $\phi$ be the corresponding real-analytic diffeomorphism 
given by Lemma~\ref{adjustment}. 
Then $\widehat\rho = \widetilde\rho\circ\phi$ is the desired real-analytic function.

The function $\widehat{\rho}$ induces a function $\rho:\S^1\to \R$ by the formula $\rho(e^{2\pi t})=\widehat{\rho}(t).$
The property (A1) follows from (i) immediately. Let us check the property (A2). 
Of course it suffices to show $|Dh^n(x)|_{\rho}\ge 2$ for each $x\in X_n$, $n\ge 1$. 
If $n\le n_0$, then $Dh^n(x)\ge \rho_0$, $\rho(x)\le \rho_0/2$ and $\rho(h^n(x))=1$, 
hence $|Dh^n(x)|_{\rho}\ge 2$. 
If $n>n_0$, then $Dh^n(x)\ge 4$, $\rho(x)<2$ and $\rho(h^n(x))=1$, 
hence again $|Dh^n(x)|_\rho\ge 2$.
\end{proof}

\begin{lemma}\label{adjustment}
Given $\epsilon>0$ and $n\ge 1$ distinct $x_1<\ldots < x_n < x_1 + 1$ there 
exists $\delta > 0$ such that for any set of $n$ points $y_1, \ldots , y_n$ with 
$|y_j-x_j| < \delta$ for each $j, 1 \le j \le n$ there exists a real-analytic diffeomorphism 
$\phi : \R \to \R$, satisfying for all $x\in\R$: $\phi(x+1) = \phi(x)+1$, $|\phi(x)-x| < \epsilon$, 
and $|\phi'(x)-1|<\epsilon$ and for each $j$ : $\phi(x_j) = y_j$.
\end{lemma}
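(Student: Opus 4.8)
The plan is to construct $\phi$ explicitly as a small real-analytic perturbation of the identity supported (up to periodicity) near the points $x_j$, by summing a finite Fourier-type or bump-like real-analytic family whose coefficients are then solved for. First I would observe that the problem is purely local plus a periodicity constraint: choosing $\delta$ small forces the prescribed displacements $y_j - x_j$ to be small, so we are looking for a real-analytic $\psi := \phi - \mathrm{id}$ with $\psi(x+1)=\psi(x)$, $\|\psi\|_{C^1}$ small, and $\psi(x_j) = y_j - x_j$ for each $j$. A natural way to produce such $\psi$ is to fix once and for all $n$ real-analytic $1$-periodic functions $g_1,\dots,g_n : \R \to \R$ with the property that the matrix $(g_k(x_j))_{j,k}$ is invertible — for instance truncated Fourier polynomials of high enough degree, whose unisolvence at the $n$ distinct nodes $x_1,\dots,x_n$ is a standard Vandermonde-type fact — and then set $\psi = \sum_k c_k g_k$ with the coefficient vector $c = (g_k(x_j))^{-1}(y_j-x_j)_j$.

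Next I would make the quantitative estimates. Since the matrix $(g_k(x_j))$ depends only on $\epsilon$ and the fixed nodes $x_j$ (not on the $y_j$), its inverse has some fixed operator norm $M$; hence $|c| \le M \max_j |y_j - x_j| \le M n \delta$. Because each $g_k$ is a fixed real-analytic function, $\|g_k\|_{C^1}$ is a fixed constant, so $\|\psi\|_{C^1} \le (\sum_k \|g_k\|_{C^1}) M n \delta =: C\delta$. Choosing $\delta < \epsilon/(2C)$ then gives simultaneously $|\psi(x)| < \epsilon$ and $|\psi'(x)| < \epsilon$ for all $x$; in particular $|\phi'(x) - 1| < \epsilon < 1$, so $\phi' > 0$ everywhere and $\phi$ is a diffeomorphism of $\R$. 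The identity $\phi(x+1) = \phi(x) + 1$ is immediate from $1$-periodicity of $\psi$, and by construction $\phi(x_j) = x_j + \psi(x_j) = y_j$. Real analyticity of $\phi$ is clear since it is a finite linear combination of real-analytic functions plus the identity.

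The one point requiring a little care — and the step I expect to be the main (mild) obstacle — is guaranteeing the invertibility of $(g_k(x_j))_{j,k}$ together with keeping the $g_k$ $1$-periodic and real-analytic. Pure monomials would do the interpolation but are not periodic; so I would instead take $g_k(x) = \cos(2\pi m_k x)$ and $\sin(2\pi m_k x)$ for suitably many frequencies $m_k$, or equivalently a Dirichlet-kernel-type basis, and invoke the fact that trigonometric polynomials of degree $\ge n$ are unisolvent on any $n$ distinct nodes in a period (no three of which are forced to coincide, which holds since the $x_j$ are distinct mod $1$). Alternatively, one can use a partition-of-unity-style argument with real-analytic "bump-like" functions (e.g. high powers of shifted cosines) localized near each $x_j$; then the matrix is diagonally dominant and invertibility is transparent, at the cost of slightly messier $C^1$ bounds. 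Either route closes the argument; I would present the trigonometric-polynomial version as it is cleanest and the unisolvence is a cited classical fact.

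\begin{proof}
Write $\phi = \mathrm{id} + \psi$. It suffices to find, for $\delta>0$ small enough depending on $\epsilon$ and $x_1<\cdots<x_n$, a real-analytic $1$-periodic $\psi:\R\to\R$ with $\|\psi\|_{C^0}<\epsilon$, $\|\psi'\|_{C^0}<\epsilon$ and $\psi(x_j)=y_j-x_j$ for all $j$.

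Fix trigonometric polynomials $g_1,\ldots,g_n$ (real-analytic and $1$-periodic) such that the matrix $A:=\bigl(g_k(x_j)\bigr)_{1\le j,k\le n}$ is invertible; this is possible because trigonometric polynomials of sufficiently high degree are unisolvent on any set of $n$ distinct points of $\TT$. Set $M:=\|A^{-1}\|$ and $L:=\sum_{k=1}^n\|g_k\|_{C^1}$, both depending only on $\epsilon$ and the $x_j$. Given $y_1,\ldots,y_n$ with $|y_j-x_j|<\delta$, let $c=(c_1,\ldots,c_n)^{\mathsf T}:=A^{-1}(y_j-x_j)_{j=1}^n$ and $\psi:=\sum_{k=1}^n c_k g_k$. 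Then $\psi(x_j)=y_j-x_j$ by construction, and $\max_k|c_k|\le M\sqrt n\,\delta$, so
$$\|\psi\|_{C^1}\le \Big(\max_k|c_k|\Big)\sum_{k=1}^n\|g_k\|_{C^1}\le M\sqrt{n}\,L\,\delta.$$
Choosing $\delta<\epsilon/(1+M\sqrt n\,L)$ gives $\|\psi\|_{C^0}<\epsilon$ and $\|\psi'\|_{C^0}<\epsilon<1$. Hence $\phi'=1+\psi'>0$ everywhere, so $\phi:\R\to\R$ is an increasing real-analytic diffeomorphism; it satisfies $\phi(x+1)=\phi(x)+1$ since $\psi$ is $1$-periodic, $|\phi(x)-x|=|\psi(x)|<\epsilon$, $|\phi'(x)-1|=|\psi'(x)|<\epsilon$, and $\phi(x_j)=x_j+\psi(x_j)=y_j$.
\end{proof}
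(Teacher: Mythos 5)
Your proof is correct and shares the same high-level strategy as the paper: write $\phi=\mathrm{id}+\psi$ with $\psi$ a small $1$-periodic real-analytic function that interpolates the displacements $y_j-x_j$, then get the $C^1$ bounds and hence invertibility from smallness of $\delta$. The difference is in how the interpolant is produced. You work with a generic trigonometric basis $g_1,\dots,g_n$ for which the evaluation matrix $A=(g_k(x_j))$ is invertible, cite unisolvence as a classical fact, and solve the linear system $c=A^{-1}(y_j-x_j)_j$; the quantitative control then comes through $\|A^{-1}\|$. The paper instead builds an explicit Lagrange-type cardinal basis: with $g_j(x)=\sin^2(\pi(x-x_j))$ it sets $G_j=\prod_{i\neq j}g_i$, so $G_j(x_i)=0$ for $i\neq j$ and $G_j(x_j)\geq K(m)>0$ with $m$ the minimal gap between consecutive nodes; then $\phi(x)=x+\sum_j(y_j-x_j)G_j(x)/G_j(x_j)$ interpolates with no matrix inversion, and the bound on $\delta$ is written directly in terms of $m$ and $n$. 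Your route is slightly more abstract but cleaner; the paper's is fully explicit and makes the dependence of $\delta$ on the node geometry visible. One cosmetic caveat in your write-up: the chain $\|\psi'\|_{C^0}<\epsilon<1$ implicitly assumes $\epsilon<1$; to be safe replace $\epsilon$ by $\min(\epsilon,1/2)$ in your choice of $\delta$, which costs nothing and ensures $\phi'>0$.
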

\begin{proof}
If $n=1$ set $\delta=\epsilon$ and $\phi(x) = x + y_1 - x_1$. Otherwise set 
$$
m = \min \{(x_2-x_1), \ldots (x_n-x_{n-1}), (1+x_1-x_n)\}
$$
and define $g_j(x) := \sin^2(\pi(x-x_j))$ for each $j, 1\le j \le n$. 
Then $g_j$ is $1$-periodic, $0 \le g_j(x) \le 1$ for all x with $g_j(x)=0$ only at $x_j$ 
and the absolute value of $g_j'(x) = \pi\sin(2\pi(x-x_j))$ is bounded by $\pi$.
Set 
$$
G_j(x) := \prod_{i, i\not=j} g_i(x)
$$
So that $0\le G_j(x) \le 1$, $G_j(x_i) = 0$ for $i\not=j$, $|G_j'(x)| \le \pi(n-1)$  and 
$$
G_j(x_j) = \prod_{i, i\not=j} g_i(x_j) \ge K(m)
$$
where $K(m)$ is a constant depending only on $m$. 
Define 
$$
\phi(x) = x + \sum_{j=1}^n (y_j-x_j)\frac{G_j(x)}{G_j(x_j)}
$$
so that $\phi(x_j) = y_j$ for each $j$. 
Then for $\delta = m\epsilon/n^2$ and for each $j$ : $|y_j-x_j|<\delta$ 
the map $\phi$ is the desired diffeomorphism.
\end{proof}

Fix an admissible function $\rho$ as above and let
\begin{equation}\label{eta}
\eta=\inf_{y\in\S^1} \rho(y)/ \sup_{y\in\S^1} \rho(y)
\end{equation}
Note that $|Dh^k(x)|_\rho\ge \eta Dh^k(x)$ holds for any $x\in\S^1$ and any $k\ge 1$.

We say that a set $U$ is {\em eventually expanding} if there exists a positive integer $k(U)$ such that whenever $k\geq k(U)$ and $x\in U\setminus \Par(h)$, we have $|Dh^k(x)|_\rho>1$. The assertion of Theorem~\ref{thm:circlemap} is that $\S^1$ is eventually expanding. 

\begin{proof}[Completion of proof of Theorem~\ref{thm:circlemap}]
By compactness, it suffices to show that
each $x_0 \in \S^1$ has an eventually expanding neighborhood $U(x_0)$.

{\em Case 1.} Assume $h^k(x_0)\not\in \Par(h)$ for each $k\ge 0$. Then by Proposition~\ref{prop:expanding} (c),
$Dh^k(x_0)\rightarrow \infty $ as $k \rightarrow
\infty$, so by continuity, there exists a $k_0$ and a neighborhood $U(x_0)$ of $x_0$ such that, for $x \in U(x_0)$,
$Dh^{k_0}(x)\geq \frac{2}{K_0 \eta}.$ By Proposition~\ref{prop:expanding} (b), for all $k \geq k_0$ and $x \in U(x_0)$,
$$ Dh^k(x)=Dh^{k_0}(x) Dh^{k-k_0}(h^{k_0}(x))\ge K_0 Dh^{k_0}(x) \geq \frac{2}{\eta}, $$ hence
$$|Dh^k(x)|\pp \geq \eta Dh^k(x) \geq 2.$$
Thus $U(x_0)$ is eventually expanding.

{\em Case 2.} Assume that $h^k(x_0)\in \Par(h)$ for some $k\ge 0$. By (A2), it suffices to consider the case $x\in \Par(h_0)$.
Reducing $\delta_0$ in (A1) if necessary, we may assume that
$Dh(x)>1$ holds on $B(x_0,\delta_0)\setminus \{x_0\}$.
Let $K=2/\eta$ and let $\delta=\delta(x_0,\delta_0, K)>0$ be a small constant given by Proposition~\ref{prop:expanding} (d).
Let us prove that $|Dh^k(x)|_\rho>1$ holds for all $x\in B(x_0,\delta)\setminus \{x_0\}$ and $k\ge 1$, so in particular, $B(x_0,\delta)$ is eventually expanding. Indeed, if $x, h(x), \ldots, h^k(x)\in B(x_0,\delta_0)$, then $\rho(h^k(x))>\rho(x)$ and $Dh^k(x)>1$, hence $|Dh^k(x)|_\rho>1$.  Otherwise, we have
$Dh^k(x)> 2/\eta$ which implies that $|Dh^k(x)|_\rho\ge \eta Dh^k(x)\ge 2.$
\end{proof}

\subsection{Geometric expanding properties of topological expanding maps}
This section is devoted to the proof of Proposition~\ref{prop:expanding}. Throughout, fix $h\in \mathcal{T}_d^*$. We shall first establish lower bounds on the derivative of first return maps to small nice intervals.

Recall an open interval $A \subset \S^1$ is \textit{nice} if $h^n(\partial A) \cap A= \emptyset $ for all $n \geq 0.$
Let $$D(A)=\{x\in \S^1: \exists k\ge 1 \text{ such that } h^k(x)\in A\}.$$ 
For each $x\in D(A)$, the {\em first entry time} $k(x)$ is the minimal positive integer such that $h^{k(x)}(x)\in A$. 
The {\em first entry map} $R_A: D(A)\to A$ is defined by $x\mapsto h^{k(x)}(x)$. For $x\in D(A)\cap A$, the entry time is also called the {\em first return time} and the map $R_A|_{D(A)\cap A}$ is called the {\em first return map}.
For a nice interval $A$ and any component $J$ of $D(A)$, 
the entry time $k(x)$ is independent of $x\in J$, 
and if we denote the common entry time by $k$, 
then the intervals $J, h(J),\ldots, h^{k-1}(J)$ are pairwise disjoint 
and $h^k: J\to A$ is a diffeomorphsim. 

There is an arbitrarily small nice interval around any point $z_0\in\S^1$. 
Indeed, let $O$ be an arbitrary periodic orbit such that $h^k(z_0)\not\in O$ for all $k\ge 0$. Then for any $n$, any component of $\S^1\setminus h^{-n}(O)$ is a nice interval.  By~\cite{MMS}, $h$ has no wandering interval which implies that $h^{-n}(O)$ is dense in $\S^1$. The statement follows.

\begin{lemma}\label{2}
For any periodic point $p$ and any constant $K>0$, there exists an arbitrarily small nice interval $A\ni p$ with the following property. Denote by $A'$ the component of $D(A)$ which contains $p$. Then $$DR_A(x)>1\text{ for all }x\in A'\setminus \{p\}$$ and
$$DR_A(x)\ge K\text{ for all }x\in D(A)\cap(A\setminus A').$$
\end{lemma}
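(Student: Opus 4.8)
The statement is Lemma~\ref{2}, and the plan is to construct the nice interval $A$ around the periodic point $p$ by pulling back a tiny neighborhood of the orbit of $p$ under a high iterate of $h$, and then to estimate $DR_A$ using the bounded distortion Lemma~\ref{dist} together with the weak expansion hypothesis (condition (3) defining $\T_d^*$). After replacing $h$ by an iterate, I may assume $p$ is a fixed point, and by hypothesis there is $\delta_1>0$ with $Dh(x)>1$ on $B(p,\delta_1)\setminus\{p\}$.

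\textbf{Step 1: construction of $A$.} First I would fix a periodic orbit $O$ disjoint from the forward orbit of $p$ (and not containing $p$), so that components of $\S^1\setminus h^{-n}(O)$ are nice intervals, as recalled in the text. For $n$ large the component $A=A_n$ containing $p$ shrinks to $\{p\}$ (no wandering intervals, by~\cite{MMS}), so for $n$ large $A\subset B(p,\delta_1)$. Let $A'$ be the component of $D(A)$ containing $p$. Since $p$ is fixed and $h(A)\supset A$ would contradict niceness unless handled carefully, one checks from the definition of first return that the return time of $p$ to $A$ is $1$ (because $p$ is fixed, $h(p)=p\in A$), so $A'\ni p$ maps under $h$ onto $A$, i.e. $R_A|_{A'}=h|_{A'}$ and $A'\subset A\subset B(p,\delta_1)$. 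Hence $DR_A(x)=Dh(x)>1$ for $x\in A'\setminus\{p\}$, which is the first assertion.

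\textbf{Step 2: the bound $DR_A\ge K$ away from $A'$.} For a component $J$ of $D(A)\cap(A\setminus A')$ with return time $k$, the intervals $J,h(J),\dots,h^{k-1}(J)$ are pairwise disjoint and $h^k:J\to A$ is a diffeomorphism, so by Lemma~\ref{dist} the distortion of $h^k$ on $J$ is at most $C_0$; thus for any $x\in J$, $Dh^k(x)\ge e^{-C_0}\,|A|/|J|$. So it suffices to show $|J|/|A|$ is small, uniformly, once $n$ is large; equivalently that returns to $A$ that are not the trivial fixed-point return have large ``time'' or small ``scale''. The key point is: as $n\to\infty$, the maximal length of a component $J$ of $D(A_n)\cap(A_n\setminus A_n')$ tends to $0$. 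This follows because such a $J$, together with its orbit up to time $k$, can be compared to the pullback structure of $A_n$: the hypothesis that every periodic orbit is weakly expanding rules out the existence of a ``short loop'' — a non-trivial return of a definite-size interval — since that would force a non-repelling periodic point other than the expanding behavior guaranteed. More concretely, if along a sequence $n_i$ there were components $J_i$ with $|J_i|/|A_{n_i}|\ge \epsilon_0$, then by bounded distortion $Dh^{k_i}$ would be bounded on $J_i$, and a limiting/Koebe argument (using no wandering intervals and the finiteness of periodic attractors implied by weak expansion) would produce an interval mapped into itself with bounded derivative, contradicting condition (3). Combining, for $n$ large we get $DR_A(x)\ge e^{-C_0}/\epsilon \ge K$ on all of $D(A)\cap(A\setminus A')$, choosing $\epsilon$ small.

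\textbf{Main obstacle.} The delicate part is Step 2: proving that non-trivial returns to the shrinking nice intervals $A_n$ have relative scale tending to zero uniformly. The ``no wandering interval'' theorem of~\cite{MMS} and bounded distortion (Lemma~\ref{dist}) give the tools, but one must carefully exploit the weak-expansion hypothesis at \emph{all} periodic points — not just $p$ — to exclude bounded-derivative loops; this is exactly where the analogue of Ma\~n\'e's argument enters, now under the weaker $C^{1+\BV}$ plus weak-expansion assumption rather than $C^2$ plus hyperbolicity. I expect the proof in the paper handles this via a compactness/pullback argument culminating in the derivative lower bounds packaged in Proposition~\ref{prop:expanding}, several parts of which (especially (b) and (d)) are likely invoked to make Step 2 rigorous.
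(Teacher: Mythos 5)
Your Step 1 is essentially correct: with $A=A_n$ a component of $\S^1\setminus h^{-n}(O)$ containing $p$, one does get $A_{n+1}=A_n'$ and $R_A|_{A'}=h|_{A'}$ (after passing to $h^{s_0}$), so the first assertion $DR_A>1$ on $A'\setminus\{p\}$ follows from the weak-expansion hypothesis once $A\subset B(p,\delta_1)$. The paper sets this up slightly differently — it takes a fixed nice interval $B_0\ni p$ with $B_0\cap\text{orb}(p)=\{p\}$ and defines $B_n$ by iterated pullback of $B_0$ under $h^{s_0}$ along the orbit — but the effect is the same: a nested sequence of nice intervals shrinking to $p$ with $B_{n+1}$ the pullback of $B_n$ containing $p$.

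The gap is in Step 2, and it is a real one. You reduce to showing $|J|/|A|$ is uniformly small for components $J$ of non-trivial returns, and then appeal to a ``limiting/Koebe argument producing an interval mapped into itself with bounded derivative, contradicting condition (3).'' This does not work as stated: a rescaled limit would at best produce a periodic point $q$ with $Dh^{k}(q)$ bounded, and condition (3) does not preclude that — it allows $Dh^{k}(q)=1$ at parabolic points. You also propose to invoke Proposition~\ref{prop:expanding}~(b) and (d) to patch the argument, but both of those are proved in the paper \emph{using} Lemma~\ref{2} (and Lemma~\ref{2'}), so that would be circular; the same applies to Lemma~\ref{highper}.

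The paper's mechanism is different and more direct, and it is the idea you are missing. For $x\in A\setminus A'=B_n\setminus B_{n+1}$ with first return time $k$, the key observation is that $k>ns_0$ and the first $ns_0$ iterates of $x$ march outward through the nested rings: $h^{js_0}(T)$ is a component of $B_{n-j}\setminus B_{n-j+1}$ for $0\le j\le n$, where $T$ is the component of $B_n\setminus B_{n+1}$ containing $x$. Because the intervals $B_1,h(B_1),\ldots,h^{s_0-1}(B_1)$ are pairwise disjoint, all the $h^j(T)$ for $0\le j<ns_0$ are pairwise disjoint, so $h^{ns_0}|_T$ has distortion $\le C_0$ and maps $T$ onto a component of $B_0\setminus B_1$ of length $\ge\delta_0$ (a \emph{fixed} scale, independent of $n$). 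Meanwhile $h^{ns_0}(J)$ is a component of $h^{-(k-ns_0)}(B_n)$, hence of length $\le\varepsilon_n$, where $\varepsilon_n:=\sup\{|J'|:J'\text{ a component of }h^{-i}(B_n),\ i\ge 0\}\to 0$ by absence of wandering intervals. Combining, $|J|/|T|\le e^{C_0}\varepsilon_n/\delta_0$, and one more application of Lemma~\ref{dist} along the full return gives $Dh^k(x)\ge e^{-C_0}|T|/|J|\ge e^{-2C_0}\delta_0/\varepsilon_n$, which is $\ge K$ once $n$ is large. The point is that the estimate normalizes at the fixed annulus $B_0\setminus B_1$ rather than at the shrinking scale $|A_n|$, which is precisely what lets the bound become large as $n\to\infty$ without any compactness or contradiction argument.
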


\begin{proof}
Let $s_0$ be the period of $p$. Let $B_0\ni p$ be an arbitrary nice interval such that $B\cap\text{orb}(p)=\{p\}.$
For each $n\ge 1$, define inductively $B_n$ to be the component of $h^{-s_0}(B_{n-1})$ which contains $p$. Then $B_n$ is a nice interval for each $n$ and $|B_n|\to 0$ as $n\to\infty$.  Let $$\varepsilon_n=\sup \{|J|: J\text{ is a component of } h^{-i}(B_n)\text{ for some }i\ge 0\}.$$ Since $h$ has no wandering intervals, $\varepsilon_n\to 0$ as $n\to\infty$.

Let $\delta_0$ be the minimum of the length of the components of $B_0\setminus B_1$. Choose $n$ large enough such that
\begin{itemize}
\item $\varepsilon_n\le e^{-2C_0}\delta_0/K$; 
(where $C_0$ is the total variation of $\log Dh$.)
\item $Dh^{s_0}>1$ on $B_{n+1}\setminus \{p\}$ 
(according to the third property defining $\mathcal{T}_d^*$)
.
\end{itemize}

Let us verify that $A:=B_n$ satisfies the desired properties. So let $x\in A\setminus A'=B_n\setminus B_{n+1}$ and let $k\ge 1$ be the first return time of $x$ into $A$. We need to prove that $Dh^k(x)\geq K.$

To this end, let $T$ be the component of $B_n\setminus B_{n+1}$ which contains $x$ and let $J$ be the component of $h^{-k}(B_n)$ which contains $x$. Then $J\subset T$ and $k>ns_0$. Note that $h^{js_0}(T)$ is a component of $B_{n-j}\setminus B_{n-j+1}$ for each $0\le j\le n$. Since the first return time of $p$ to $B_0$ is equal to $s_0$, the intervals $B_1, h(B_1), \ldots, h^{s_0-1}(B_1)$ are pairwise disjoint. Therefore, the intervals $h^j(T)$, $0\le j<ns_0$, are pairwise disjoint. By Lemma~\ref{dist}, $h^{s_0 n} |T$ has distortion bounded by $C_0$. Since $h^{ns_0}(J)$ is a component of $h^{-k+s_0n}(B_n)$, we have $|h^{ns_0}(J)|\le \varepsilon_n$. Therefore,
$$\frac{|J|}{|T|} \le e^{C_0}\varepsilon_n/\delta_0.$$ Since $J, h(J), \ldots, h^{k-1}(J)$ are pairwise disjoint, by Lemma~\ref{dist} again, we obtain
$$Dh^k(x)\ge e^{-C_0} \frac{|B_n|}{|J|}\ge e^{-C_0}\frac{|T|}{|J|}\ge K.$$
\end{proof}

\begin{lemma}\label{highper}
For any $K\geq 1$, there exists $s_0$ such that 
if $p$ is a periodic point with period $s\ge s_0$ then $Dh^s(p)\ge K$. 
In particular, $\text{Par}(h)$ is finite.
\end{lemma}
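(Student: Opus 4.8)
The plan is to prove the contrapositive-flavored statement directly: find a uniform constant $K$ governing derivatives of return maps, then argue that a very long periodic orbit must accumulate enough expansion along the way. More precisely, I would fix a constant $K \geq 1$ and, invoking \lemref{2}, produce for a suitable reference periodic point a small nice interval $A$ with $DR_A(x) > 1$ on the immediate component $A'$ and $DR_A(x) \geq K$ on $D(A) \cap (A \setminus A')$. The first observation is that a periodic point $p$ of large period cannot lie in $A'$ (the component of $D(A)$ containing the chosen reference periodic point), because $A'$ is essentially a small neighborhood of that reference point on which the first-return map has a single fixed point; so any other periodic point passing through $A$ must enter through $A \setminus A'$, picking up a factor $\geq K$ from $DR_A$.

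The main step is then a pigeonhole / counting argument. Cover $\S^1$ by finitely many nice intervals of the type produced by \lemref{2}, one around each of a fixed finite collection of periodic points whose grand orbits are dense (using that $h$ has no wandering intervals, as already invoked via \cite{MMS}). Given a periodic point $p$ of period $s$, break the orbit segment $p, h(p), \dots, h^s(p) = p$ into consecutive first-return excursions to one of these nice intervals $A$; by \lemref{dist} each excursion has distortion bounded by $C_0$, so $Dh^s(p)$ is, up to a bounded multiplicative error $e^{\pm C_0}$, a product of the return-map derivatives $DR_A$ over these excursions. If $s$ is large, there are many excursions, and I would show that all but boundedly many of them must enter through $A \setminus A'$ (equivalently, pick up the factor $K$): an excursion staying inside $A'$ corresponds to the return map near the reference periodic point, and a long orbit cannot remain trapped near a single periodic point forever without being that point — here one uses property (c) or (d) of \propref{prop:expanding}, or more elementarily that $A'$ shrinks and the orbit of $p$ is finite and $\neq$ the reference orbit. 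Hence $Dh^s(p) \geq e^{-C_0} K^{m}$ where $m \to \infty$ as $s \to \infty$, which gives $Dh^s(p) \geq K$ once $s \geq s_0$ for an appropriate $s_0 = s_0(K)$.

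The finiteness of $\Par(h)$ follows immediately: a parabolic point is a periodic point with $Dh^s(p) = 1$ (multiplier $1$, as noted for degree $d > 0$), but the above shows $Dh^s(p) > 1$ once $s$ is large; so parabolic orbits have bounded period, and since $h$ real-analytic (or merely $C^1$ with $Dh$ of bounded variation together with the weak-expansion hypothesis, which precludes accumulation of parabolic points — a parabolic fixed point of a fixed iterate $h^n$ is isolated because near it $Dh^n - 1$ is of bounded variation and strictly positive on a punctured neighborhood, so it cannot change sign infinitely often), there are only finitely many periodic points of each bounded period that are parabolic. This last genericity/isolation point is the one I'd be most careful about.

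The hard part will be the counting step: making precise that "most excursions hit $A \setminus A'$" uniformly in the periodic point $p$. The clean way is to choose the nice intervals small enough (depending on $K$) that the component $A'$ is so thin that the orbit of $p$ can pass through $A'$ rather than $A \setminus A'$ at most a bounded number $m_0$ of times before either leaving the $\delta_0$-neighborhood of the reference point or closing up; combined with the bounded-distortion bookkeeping this forces at least $(s/T_{\max}) - m_0 \cdot (\#\text{intervals})$ genuine $K$-gaining excursions, where $T_{\max}$ bounds individual return times from the nice-interval structure. One has to check this return-time bound is uniform, which again comes from no wandering intervals together with compactness. Modulo this, the argument is a routine distortion-plus-pigeonhole estimate.
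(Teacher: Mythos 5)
Your proposal has the right central tool (Lemma~\ref{2}) but overcomplicates the argument and contains a circularity concern you yourself flag. You only need \emph{one} factor of $K$: once $DR_A\ge 1$ everywhere in $D(A)\cap A$ and $\ge K$ off the central component $A'$, a single visit of the return orbit to $A\setminus A'$ already gives $Dh^s(p)\ge K$, because $Dh^s(p)$ factors exactly as a product of return-map derivatives (no distortion correction needed, since the estimate is applied to a return orbit, not to an interval). Your ``$K^m$ with $m\to\infty$'' pigeonhole is therefore unnecessary, and it brings real bookkeeping headaches you do not resolve: when you cover $\S^1$ by several intervals $A_1,\dots,A_r$, the excursions of a given orbit interleave between the different $A_i$, so you cannot simply multiply return derivatives from different intervals; and the ``uniform return-time bound $T_{\max}$'' you appeal to does not hold for these maps (return times to a small nice interval are unbounded). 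Moreover, invoking Proposition~\ref{prop:expanding}~(c) or (d) to exclude trapping in $A'$ would be circular, since those properties are proved only after this lemma (and (a) \emph{is} the finiteness of $\Par(h)$). The correct, elementary reason that the return orbit of $p$ must hit $A\setminus A'$ is that $R_A|_{A'}=h^{s_0}|_{A'}$ is an orientation-preserving diffeomorphism onto $A\supsetneq A'$ with $DR_A>1$ on $A'\setminus\{p_0\}$, so it has $p_0$ as its unique fixed point and is strictly monotone away from it; hence no periodic orbit of $h$ distinct from $\mathrm{orb}(p_0)$ can have all its returns land in $A'$. You gesture at this (``$A'$ shrinks\dots'') but do not state the order-preserving/unique-fixed-point argument that actually carries it.

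The more substantive omission is that you do not cleanly handle periodic orbits that never enter $A$ at all. The paper's proof fixes a single fixed point $p_0$ and uses \emph{two} cases: if $\mathrm{orb}(p)$ meets $A$ it uses the return-map argument above; if $\mathrm{orb}(p)\cap A=\emptyset$ it takes the nice interval $I$ between $p_0$ and a point $p'\in\mathrm{orb}(p)$ disjoint from the orbit (so $|I|\ge\delta$), pulls back to a component $J$ of $h^{-s}(I)$ at $p'$ (so $|J|\le\varepsilon_s\to 0$), and applies Lemma~\ref{dist} to get $Dh^s(p)\ge e^{-C_0}\delta/\varepsilon_s\ge K$ once $s$ is large. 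This second case is where the threshold $s_0$ actually comes from; your covering idea can be made to work as a substitute, but only after one notes that the covering makes the first case always apply once $s_0$ exceeds the periods of the finitely many reference points $p_1,\dots,p_r$, and you would still owe the monotonicity argument interval-by-interval. Finally, on finiteness of $\Par(h)$: once parabolic periods are bounded, isolation of fixed points of a fixed iterate $h^m$ in the $C^{1+\BV}$ setting follows because the $\T_d^*$-condition gives $Dh^m>1$ on a punctured neighborhood of each such fixed point, forcing $h^m(x)-x$ to have a definite sign on each side; this is the clean version of the point you say you would be ``most careful about,'' and it does not really need the BV hypothesis.
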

\begin{proof}
Let $p_0$ be an arbitrary fixed point of $h$ and for each $n=1,2,\ldots$, let
$$\varepsilon_n=\min\{|J|: J\text{ is a component of } \S^1\setminus h^{-n}(p_0)\}.$$
Then $\varepsilon_n\to 0$ as $n\to\infty$.

By Lemma~\ref{2}, there is a small nice interval $A\ni p_0$ such that $DR_A\ge 1$ holds on $A'$ and $DR_A>K\geq 1$ holds on $D(A)\cap(A\setminus A')$, where $A'$ is the component of $h^{-1}(A)$ which contains $p_0$.
Let $\delta$ be the minimum of the length of the components of $A\setminus \{p_0\}$ and let $s_0\ge 2$ be so large that $\varepsilon_s\le \delta/(e^{C_0}K)$ for all $s\ge s_0$.

Now let $p$ be a periodic point with period $s\ge s_0$. We shall prove that $Dh^{s}(p)\ge K$.
Assume first that there exists $p'\in \text{orb}(p)\cap A$. Let $0=t_0<t_1<t_2<\cdots<t_n=s$ the consecutive returns of $p'$ into $A$.  Note that there exists $0\le i_0<n$ such that $h^{t_{i_0}}(p')\in A\setminus A'$,
so $$Dh^s(p)=Dh^s(p')=\prod_{i=0}^{n-1} DR_A(h^{t_i}(p'))\ge K.$$
Now assume that $\text{orb}(p)\cap A=\emptyset$. Let $I$ be an open interval bounded by $p_0$ and some point $p'$ in $\text{orb}(p)$ with the property that $I\cap \text{orb}(p)=\emptyset$. Then $I$ is a nice interval and $|I|\ge \delta$. Let $J$ be a component of $h^{-s}(I)$ which has $p'$ as a boundary point. Then $h^j(J)\cap I=\emptyset$ for $j=1,2,\ldots, s-1$ and $|J| \le \varepsilon_s$. By Lemma~\ref{dist}, we have
$$Dh^s(p)=Dh^s(p')\ge e^{-C_0} \frac{|I|}{|J|}\ge e^{-C_0} \delta /\varepsilon_s\ge K.$$
This proves the first statement. As fixed points of $h^n$ are isolated for each $n\ge 1$, it follows that $\Par(h)$ is finite.
\end{proof}

\begin{lemma}\label{2'}
For each $h\in\mathcal{T}_d^*$, there exists a constant $\lambda_0>1$ such that
for any $x\in\S^1\setminus \Par(h)$, if $A$ is a sufficiently small nice interval containing $x$, then $DR_A\ge \lambda_0$ holds on $D(A)\cap A$.
\end{lemma}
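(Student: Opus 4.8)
The idea is to extract a uniform expansion constant $\lambda_0 > 1$ from a finite collection of periodic orbits, and then use the distortion control of Lemma~\ref{dist} to transfer this bound to first return maps of small nice intervals around an arbitrary non-parabolic point. First I would observe, using Lemma~\ref{highper}, that there are only finitely many periodic orbits whose multiplier equals $1$ — the parabolic ones — and that every other periodic orbit is repelling, i.e. has multiplier $>1$. Let $\lambda_1 > 1$ be a number strictly smaller than the smallest multiplier among the (finitely many) repelling periodic orbits of period at most some threshold $s_0$, and note that by Lemma~\ref{highper} all periodic orbits of period $\ge s_0$ have multiplier bounded below by, say, $2$. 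Thus there is a uniform $\lambda_1 > 1$ with $Dh^s(p) \ge \lambda_1$ for every periodic point $p \notin \Par(h)$ of period $s$. Set $\lambda_0 := \lambda_1^{1/2}$, say, to leave room for distortion losses.

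Now fix $x \in \S^1 \setminus \Par(h)$. The key point is that a first return map $R_A$ on a component $W$ of $D(A) \cap A$ is of the form $h^k : W \to A$ with $W, h(W), \ldots, h^{k-1}(W)$ pairwise disjoint, so by Lemma~\ref{dist} its distortion is bounded by $C_0$; hence $DR_A(y)$ is, up to the factor $e^{\pm C_0}$, comparable to $|A|/|W|$. The orbit segment $W, h(W), \ldots, h^{k-1}(W)$ shadows a periodic-like itinerary: if we choose $A$ small enough, any return orbit of length $k$ that stays close to $x$ must have large derivative by the non-recurrence/expansion estimates, while short returns correspond to genuine periodic orbits through (or near) $A$. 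More precisely, I would argue by contradiction: suppose for a sequence of nice intervals $A_n \ni x$ shrinking to $x$ there are return components $W_n$ with $DR_{A_n}(y_n) < \lambda_0$ for some $y_n \in W_n$; writing $R_{A_n} = h^{k_n}$, either $k_n$ stays bounded or $k_n \to \infty$. If $k_n$ is bounded, pass to a subsequence with constant $k_n = k$; then $h^k$ has a fixed point near $x$ (a limit of the intervals $W_n$ whose images cover $A_n \ni x$), which is a periodic point $p$ of period dividing $k$ with $Dh^k(p) \le \lambda_0 < \lambda_1$ — forcing $p \in \Par(h)$ and hence $x \in \Par(h)$, a contradiction, unless $x$ is merely close to but not in $\Par(h)$, in which case one uses Proposition~\ref{prop:expanding}(d) to still get a large derivative. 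If $k_n \to \infty$, then the distortion bound gives $DR_{A_n}(y_n) \ge e^{-C_0} |A_n|/|W_n| \ge e^{-C_0} |A_n| / \varepsilon_{k_n}$, and since $h$ has no wandering intervals the maximal size $\varepsilon_{k_n}$ of a pullback of a fixed interval tends to $0$, making this quantity blow up — again a contradiction once $n$ is large.

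The main obstacle I expect is the bookkeeping in the bounded-$k_n$ case: one must be careful to locate an actual periodic point from the shadowing data and to handle the possibility that $x$ itself is close to $\Par(h)$ (so that the nearby periodic point is parabolic), which is exactly where Proposition~\ref{prop:expanding}(d) — the estimate that an orbit starting near a fixed point but eventually escaping a definite neighborhood has large derivative — comes into play. A cleaner route, which I would likely adopt, is to reduce to two regimes depending on whether $x$ lies outside a fixed neighborhood $\bigcup_{p \in \Par(h)} B(p, \delta_0)$ of the (finitely many) parabolic points — where Proposition~\ref{prop:expanding}(b),(c) and compactness give uniform expansion of high iterates directly, hence of first return maps to small enough nice intervals — or inside such a neighborhood, where Proposition~\ref{prop:expanding}(d) handles return orbits that leave $B(p,\delta_0)$ and the local repelling behavior at the parabolic fixed point (from the definition of $\T_d^*$, $Dh(x) > 1$ on a punctured neighborhood, combined with Lemma~\ref{2} applied to $p$) handles those that do not. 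Assembling these two cases and taking $\lambda_0$ to be the minimum of the constants produced yields the claim.
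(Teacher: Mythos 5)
Your core idea is the right one and matches the paper: for a return component $J$ of $D(A)\cap A$ with return time $k$, the map $h^k:J\to A$ is a monotone diffeomorphism onto $A\supset J$, so $\overline J$ contains a fixed point $p$ of $h^k$; Lemma~\ref{highper} gives a uniform lower bound on non-parabolic multipliers, and Lemma~\ref{dist} transfers this to $y\in J$. However, your implementation has two real problems.

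First, the contradiction argument with shrinking intervals $A_n\to\{x\}$ does not cleanly close in the unbounded case. You estimate $DR_{A_n}(y_n)\gtrsim |A_n|/|W_n|$ and claim $|W_n|\le\varepsilon_{k_n}\to 0$ forces a blowup, but $\varepsilon_{k_n}$ is a bound on pullbacks of a \emph{fixed} interval, whereas $W_n$ is a pullback of the \emph{shrinking} interval $A_n$; there is no a priori reason the ratio $|A_n|/|W_n|$ blows up. The paper avoids this entirely: it fixes a single small nice $A$ with $\overline A\cap\Par(h)=\emptyset$ and $|A|<\delta$, and for \emph{every} return it extracts a periodic point $p\in\overline J$. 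Since $\overline A\cap\Par(h)=\emptyset$, $p$ is not parabolic, so either $k\le s_0$ (use uniform continuity of $Dh^k$ over the finitely many $k\le s_0$ and $|J|<\delta$) or $k>s_0$ (use $Dh^k(p)\ge 2e^{C_0}$ from Lemma~\ref{highper} and the distortion bound). No limiting or shadowing argument is needed.

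Second, the ``cleaner route'' you sketch, and the fallback in your bounded case, invoke Proposition~\ref{prop:expanding}(b), (c) and (d) — but those are all proved in the paper \emph{using} Lemma~\ref{2'} (parts (b) and (c) cite it directly, and (d) cites (b)). So that route is circular and cannot be used here. The fix is precisely the paper's direct argument described above, which relies only on Lemmas~\ref{dist} and \ref{highper}.
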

\begin{proof} By Lemma~\ref{highper}, there exists $s_0$ such that if $p$ is a periodic point with period $s>s_0$ then
$Dh^s(p)\ge 2 e^{C_0}$. Let $1<\lambda_0<\lambda_1<2$ be a constant such that if $p\not\in \Par(h)$ is a periodic point of period $s\le s_0$, then $Dh^s(p)>\lambda_1$. Let $\delta>0$ be a small constant such that $|Dh^s(x_1)-Dh^s(x_2)|<\lambda_1-\lambda_0$ 
whenever $s\le s_0$ and $\text{dist}(x_1,x_2)<\delta$.

Now let $x\in \S^1\setminus \Par(h)$ and let $A\ni x$ be a nice interval such that $|A|<\delta$ and $\overline{A}\cap\Par(h)=\emptyset$. Now consider $y\in A$ with $k\ge 1$ as the first return time of $y$ to $A$. Let $J$ be the component of $h^{-k}(A)$ which contains $y$. Then $h^k: J\to A$ is a diffeomorphism with distortion bounded by $C_0$. Since $J\subset A$, there is a fixed point $p$ of $h^k$ in $\overline{J}$. Note $p\not\in \Par(h)$. Since $h^j|J$ is monotone increasing for all $0\le j\le k$, $k$ is equal to the period of $p$. If $k\le n_0$ then $Df^k(p)\ge \lambda_1$ and since $|J|\le |A|<\delta$, we have $Df^k(y)\ge Df^k(p)-(\lambda_1-\lambda_0)\ge \lambda_0$. If $k>n_0$, then $Dh^k(p)\ge 2e^{C_0}$, and hence
$Dh^k(y)\ge e^{-C_0}Dh^k(p)\ge 2>\lambda_0.$
\end{proof}

\begin{proof}[Proof of Proposition~\ref{prop:expanding}]
(a). This property was proved in Lemmas~\ref{highper}.

(b). By Lemmas~\ref{2} and ~\ref{2'}, for any $y\in \S^1$ there is a nice interval $A(y)\ni y$ such that the derivative of the first return map is at least $1$. By compactness, there exist $y_1, y_2, \ldots, y_r\in\S^1$ such that $\bigcup_{i=1}^r A(y_i)=\S^1$. Now consider an arbitrary $x\in\S^1$ and $k\ge 1$. Define a sequence $\{i_n\}\subset \{1,2,\ldots, r\}$ and $\{k_n\}$ as follows. First let $k_0=-1$, take $i_0$ such that $x\in A(y_{i_0})$ and let $k_1=\max\{1\le j\le k: h^j(x)\in A(y_{i_0})\}$. If $k_1=k$ then we stop. Otherwise, take $i_1\subset \{1,2,\ldots, r\}\setminus\{i_0\}$ be such that $h^{k_1+1}(x)\in A(y_{i_1})$ and let $k_2=\max\{k_1<j\le k: h^j(x)\in A(y_{i_1})\}$. Repeat the argument until we get $k_n=k$. Then $n\le r$ and $Dh^{k_{j+1}-k_j-1}(h^{k_j+1}(x))\ge 1$. It follows that $$Dh^k(x)\ge \prod_{i=1}^{n-1} Dh(h^{k_i}(x))\ge \left(\min_{y\in\S^1} Dh(y)\right)^{r-1}.$$ This proves the property (b).

(c). Assuming $h^k(x) \not\in \Par(h)$ for all $k \geq0$, let us prove that $Dh^k(x)\rightarrow \infty $ as $k \rightarrow \infty$. By (b), it suffices to show that $\limsup_{k\to\infty} Dh^k(x)=\infty$. Let $y\in\omega(x)\setminus \Par(h)$ and consider a small nice interval $A$ containing $y$ for which the conclusion of Lemma~\ref{2'} holds. Since $y\in\omega(x)$ there exist $n_1<n_2<\cdots$ such that $h^{n_k}(x)\in A$. By Lemma~\ref{2'}, $Dh^{n_{k+1}-n_k}(h^{n_k}(x))\ge \lambda_0>1$ for all $k$. Thus $Dh^{n_{k+1}}(x)\ge Dh^{n_1}(x) \lambda_0^{k}\to\infty$ as $k\to\infty$.

(d). The proof repeats part of the proof of Lemma~\ref{2}. Let $B_0$ be a nice interval such that $B_0\subset B(p,\delta_0)$, $B_0\cap \text{orb}(p)=\{p\}$. Define $B_n$ to be the component of $h^{-n}(B_0)$ which contains $p$. Let $\tau>0$ be the minimal length of the components of $B_0\setminus B_1$. Given $K>0$ let $n_0$ be so large that $|B_{n_0}|< e^{-C_0}\tau K_0/K$. Choose $\delta>0$ such that $B(p,\delta)\subset B_{n_0}$.

Now assuming that $d(x,p)<\delta$ and $\max_{j=1}^{k} d(h^{j}(x), p)\ge \delta_0$, let us prove $Dh^{k}(x)\ge K.$
Let $n\ge n_0$ be such that $x\in B_n\setminus B_{n+1}$. Note that $k> n$. Let $J$ be the component of $B_n\setminus B_{n+1}$ which contains $x$, then the intervals $J, h(J), \ldots, h^{(n-1)}(J)$ are pairwise disjoint, $h^{n}(J)$ is a component of $B_0\setminus B_1$.  Thus by Lemma~\ref{dist},
$$Dh^{n}(x)\ge e^{-C_0} \frac{|h^n(J)|}{|J|}\ge e^{-C_0}\frac{\tau}{|B_{n_0}|}\ge K/K_0.$$
By (b), it follows that $Dh^{k}(x)\ge K_0 Dh^{n}(x)\ge K.$

(e). Without loss of generality, we may assume that all periodic points in $\Par(h)$ are fixed points.
Let $X_0=\Par(h)$ and for $n\ge 1$, let $X_n=h^{-n}(\Par(h))\setminus h^{-n+1}(\Par(h))$. So for each $y\in X_n$, $n$ is the minimal integer such that $h^n(y)\in \Par(h)$.

Let $\delta_0>0$ be a small constant such that $h|_{B(p,\delta_0)}$ is injective and 
$B(p, \delta_0)\cap \Par(h)=\{p\}$ for each $p\in \Par(h)$. Note that this choice of $\delta_0$ implies the following: if $y\in B(p,\delta_0)\cap X_m$ for some $m\ge 1$, then $\max_{j=1}^m d(h^j(y), p)\ge \delta_0$. Thus by (d), there is a constant $\delta>0$ with the following property: if $y\in B(p,\delta)\cap X_m$ for some $m\ge 1$, then $Dh^m(y)\ge K/K_0.$

Now for each $p\in\Par(h)$, fix a nice interval $A_p\ni p$ such that $A_p\subset B(p,\delta)$.
Given $x\in X_n$ with $n\ge 1$, we shall estimate $Dh^n(x)$ from below. Let $p=f^n(x)$.

{\em Case 1.} 
Assume that there exists $0\le j<n$ such that $y:=h^j(x)\in B(p,\delta)$. Then $y\in X_{n-j}\cap B(p,\delta)$ and hence $Dh^{n-j}(x)\ge K/K_0$.
By (b), it follows that $Dh^n(x)\ge K_0 Dh^{n-j}(y)\ge K.$

{\em Case 2.} Assume now that $h^j(x)\not\in B(p,\delta)$ for all $0\le j<n$. Then $n$ is the first entry time of $x$ into $A_p$. Let $J$ be the component of $h^{-n}(A_p)$ which contains $x$. Then $J, h(J),\ldots, h^{n-1}(J)$ are pairwise disjoint. By Lemma~\ref{dist}, $Dh^n(x)\ge e^{-C_0} |A_p|/|J|$. Provided that $n$ is large enough, $|J|$ is small so that $Dh^n(x)\ge K$.
\end{proof}

\section{Parabolic external maps}\label{parabolicexternalmaps}
In this section we will prove Theorem \ref{th}, which relates parabolic external maps to topologically expanding maps and to metrically expanding maps,
and which completes the theory of parabolic-like maps.
We will start by giving an introduction to parabolic-like maps. We will always assume the degree $d \geq 2$, if not specified otherwise. 
\subsection{Parabolic-like maps}
The notion of parabolic-like maps is modeled on the notion of polynomial-like maps 
and can be thought of as an extension of the later theory.
A polynomial-like map is an object which encodes the dynamics of a polynomial on a neighborhood of its filled Julia set. We recall 
that the filled Julia set for a polynomial is the complement of the basin of attraction of the superattracting fixed point $\infty$, and 
therefore the dynamics of a polynomial is expanding on a neighborhood of its filled Julia set.

A (degree $d$) polynomial-like mapping is a (degree $d$) 
proper holomorphic map $f: U' \rightarrow U$, where $U', U\approx\D$ and $\overline {U'} \subset U$.
The filled Julia set for a polynomial-like map $(f,U',U)$ is the set of points which never leave $U'$ under iteration.
Any polynomial-like map is associated with an external map, which encodes the dynamics of the polynomial-like map outside of its
filled Julia set, so that a polynomial-like map is determined (up to holomorphic conjugacy) 
by its internal and external classes together with their matching number in $\Z/(d-1)\Z$. 
By replacing the external map of a degree $d$ polynomial-like map 
with the map $z \rightarrow z^d$ (which is an external map of a degree $d$ polynomial) via surgery, 
Douady and Hubbard proved that any degree $d$ polynomial-like map can be straightend 
(this is, hybrid conjugate) to a degree $d$ polynomial (see \cite{DH}).\\

On the other hand, in degree $2$ a parabolic-like map is an object encoding the dynamics 
of a member of the family $P_A(z)=z+1/z +A\in Per_1(1)$, where $A \in \C$, on 
a neighborhood of its filled Julia set $K_A$. This family can be characterized 
as the quadratic rational maps with a
parabolic fixed point of multiplier $1$ at $\infty$, and critical points at $\pm 1$. 
The filled Julia set $K_A$ of $P_A$
is the complement of the parabolic basin of attraction of $\infty$. So on a neighborhood of the filled Julia set $K_A$ of a map $P_A$ there exist an
attracting and a repelling direction.

A degree $d$ parabolic-like map is a 4-tuple ($f,U',U,\gamma$) where $U', U,U \cup U',\approx\D$, $U'\nsubseteq U$, 
$f:U' \rightarrow U$ is a degree $d$ proper holomorphic map with a parabolic fixed point at $z=z_0$ of
 multiplier 1, and with a forward invariant arc $\gamma:[-1,1] \rightarrow \overline {U}$, which we call dividing arc, emanating from $z_0$
 such that:
 \begin{itemize}
 \item $\gamma$ is $C^1$ on $[-1,0]$ and on $[0,1]$, and $\gamma(\pm 1) \in \partial U$,
  \item $f(\gamma(t))=\gamma(dt),\,\,\, \forall -\frac{1}{d} \leq t \leq \frac{1}{d},$ and $\gamma([ \frac{1}{d}, 1)\cup (-1, -\frac{1}{d}]) \subseteq U \setminus U',$
  \item it divides $U',U$ into $\Omega', \Delta'$ and $\Omega, \Delta$
respectively, such that $\Omega' \subset \subset U$ 
(and $\Omega' \subset \Omega$) and $f:\Delta' \rightarrow \Delta$ is an isomorphism.
 \end{itemize}
 The filled Julia set is defined in the parabolic-like case to be the set of points which do not escape $\Omega' \cup \gamma$
under iteration. As for polynomial-like maps, any parabolic-like map is associated with an external map (see \cite{L}), so that a parabolic-like map is determined (up to holomorphic conjugacy) 
by its internal and external classes. 
By replacing the external map of a degree $2$ parabolic-like map with the map
$h_2(z)=\frac{z^2+1/3}{z^2/3+1}$, (which is an external map of any member of the family $Per_1(1)(z)=\{[P_A]| P_A(z)= z+ 1/z +A\}$, as shown in \cite{L})
one can prove that any degree $2$ parabolic-like map is hybrid equivalent to a 
member of the family $Per_1(1)$ (see \cite{L}).

The notion of parabolic-like map can be generalized to objects with a finite number of parabolic cycles.
More precisely, let us call
\textit{simply parabolic-like maps} the objects defined before, which have a unique parabolic fixed point. Then a
\textit{parabolic-like map} is a 4-tuple ($f,U',U,\gamma$) where $U', U,U \cup U',\approx\D$, $U'\nsubseteq U$, $f:U' \rightarrow U$ is a degree $d$ proper holomorphic map with 
a finite set $Par(f)$ of parabolic points $p$
of multiplier $1$, such that for all $p \in Par(h)$ there exists a dividing arc $\gamma_p \subset \overline U$,  $p\in \gamma_p$,
smooth except at $p$, $\gamma= \bigcup_{p} \gamma_p$, and such that:
\begin{itemize}
 \item for $p \neq p'$, $\gamma_p \cap \gamma_{p'} = \emptyset$ and $f:\gamma_p \cap U' \rightarrow \gamma_{f(p)}$ is a diffeomorphism,
 \item it divides $U$ and $U'$ in $\Omega_p,\,\Delta_p$ and $\Omega'_p,\,\,\Delta'_p$ respectively, all connected, and such that 
 $f: \Delta'_p \rightarrow \Delta_{f(p)}$ is an isomorphism and $\Omega'_p \subset \Omega_p$,
 \item calling $\Omega= \bigcap_p \Omega_p$ and $\Omega'= \bigcap_p \Omega'_p$, we have $\Omega' \subset \subset U$.
\end{itemize}
The filled Julia set for a parabolic-like map ($f,U',U,\gamma$) is (again) the set of points that never leave $\Omega' \cup \gamma$
under iteration.
\subsection{External maps for parabolic-like maps}\label{extmap}
The construction of an external map for a simply parabolic-like map $(f,U',U, \gamma)$ with connected filled Julia set $K_f$ is relatively easy, and it shows that this map belongs to $\P_d$.
Indeed consider the Riemann map 
$\alpha: \widehat \C \setminus K_f \rightarrow \widehat \C \setminus \D$, normalized by fixing infinity and by setting $\alpha(\gamma(t))\rightarrow 1$ as $t \rightarrow 0$.
Setting $W_+= \alpha (U \setminus K_f)$ and
$W_+'= \alpha (U' \setminus K_f)$, we can define a degree $d$ covering $h^+:= \alpha \circ f \circ \alpha^{-1}: W_+' \rightarrow W_+$,
reflect the sets and the map with respect to the unit circle, and the restriction to the unit circle 
$h: \S^1 \rightarrow \S^1$ is an external map for $f$. An external map for a parabolic-like map is defined up to real-analytic diffeomorphism.
From the construction it is clear that $h \in \P_{d,1}$. 
The construction of an external map for a simply parabolic-like map with disconnected filled Julia set is 
more elaborated (see \cite{L}), and still produces a map in $\P_{d,1}$.
Repeating the costructions handled in \cite{L} for (generalized) parabolic-like maps, one can see that the external map for a degree $d$ parabolic-like map belongs to $\P_d$.

On the other hand, it comes from the Straightening Theorem for parabolic-like mappings (see \cite{L})
that a map in $\P_{d,1}$ is the external map for a parabolic-like map
(with a unique parabolic fixed point) of same degree (and the
proof is analogous in case of several parabolics fixed points and parabolic cycles).

While the space of external classes of polynomial-like mappings is easily characterized 
as those circle coverings which are q-s.-conjugate to $z \mapsto z^d$ for some $d\geq 2$,
this is not the case for parabolic external classes. Theorem \ref{th} gives a characterization for these maps.

\subsection{Proof of Theorem~\ref{th}}\label{proof}
The main technical difficulty for proving Theorem \ref{th} is to prove the following property for maps in $\M_d$:
\REFLEM{almostsamepowerseries}
For any $h\in\M_d$ there is a map $\phi\in\F_1$ 
such that the map $\chh:= \phi\circ h\circ \phi^{-1}$ also belongs to $\M_d$ 
and in addition for every orbit $\chp_0, \chp_1, \dots \chp_s=\chp_0\in\Par(\chH)$ 
say of parabolic multiplicity $2n$, 
the power series developments of $\chH : \TT \to \TT$ at the points $\chp_k$, $k\in\Z/s\Z$, take the form 
\REFEQN{uniformH}
\chH(x) = \chp_{k+1} + (x-\chp_k)(1+(x-\chp_k)^{2n}\cdot 
\chP(x-\chp_k) + \OO(x-\chp_j)^{6n})
\ENDEQN
for some fixed polynomial $\chP$ (i.e.~$\chP$ depends on the cycle, but not on $k$) 
with non-zero constant term and degree at most $4n-1$. 
\ENDLEM

We will first prove the Theorem assuming the Lemma. We prove Lemma \ref{almostsamepowerseries} in Section \ref{proofoflemma}. The following Proposition proves that $\M_d\subset\P_d$.

\REFPROP{MdinPd}
For every $h\in\M_d$ there exists $\eps_0> 0$ 
such that for every $0<\eps\le\eps_0$ 
the map $h$ has a holomorphic extension $(h, W', W, \ga)$ 
as a parabolic external map with range $W\subseteq \{z: |\log|z|| < \eps\}$. 
In particular $\M_d\subset\P_d$ and any map 
which is conjugate to $h$ by $\phi\in\F_1$ also belongs to $\P_d$.
\ENDPROP
\PROOF
It suffices to consider maps $h\in\M_d$ satisfying the 
properties of $\chh$ in the Lemma above. 
Also it suffices to work with the representative $H : \TT \to \TT$ of $h$.
Since $H$ is a real analytic covering map 
it extends to a holomorphic isomorphism
$H : V' \to V$ between reflection symmetric neighborhoods of $\R$ 
and satisfying $H(z+1) = H(z) + d$. 
For each $p\in P = E^{-1}(\Par(h))$, choose a pair of repelling Fatou cooordinates 
$\phi_{p}^\pm : \Xi_{p}^\pm \to \H_l :=\{z | \Re(z) < 0 \}$ such that 
each $\phi_p^\pm$ and each $\Xi_{p}^\pm$ 
is symmetric with respect to $\R$, and $\phi_{p+1}^\pm = \phi_p^\pm(x-1)$. 
Possibly restricting the $\phi_p^\pm$ we can suppose all the domains 
$\Xi_{p}^\pm$, with $p$ ranging over $P$, disjoint for each choice of sign,  
$H(\Xi_{p}^\pm) \supset \Xi_{h(p)}^\pm$ and $H^s$ is univalent on 
$\Xi_{p}^\pm$, where $s=s_p$ denotes the period of $E(p)$.

For each orbit in $\Par(h)$ choose a representative $p_i\in P$, 
and call $2n_i$ the parabolic multiplicity of the orbit.
Define $S_{\eps} := \{x+iy | |y| < \eps\}$ for $\eps>0$.  
For $p$ in the orbit of $p_i$, call $C_p$ the double cone,
symmetric with respect to the real line, 
such that $C_p \cap \R =\{p\}$ and the angle 
between $\R$ and $\partial C_p$ is $1/(16 n_i)$. 
Call $X_{\epsilon}= S_{\eps} \setminus \bigcup_{p \in P} C_p$.
By a compactness argument, since $DH(z)>1$ for $z \in \R \cap X_{\eps}$, 
and $\limsup_{z \in \partial X_{\eps}, z\rightarrow p}|Arg(DH(z)-1)| = \pi/4$,
there exists an $\epsilon_0 >0$ such that, for all $z \in X_{\epsilon_0}$, $\Re(DH(z))>1$.
Possibly decreasing $\epsilon_0$, we can assume that for all $p,p' \in P$, $\overline{S_{\epsilon_0}} \cap C_p \cap C_{p'}= \emptyset$.
Since $h$ satisfies the conclusion of \lemref{almostsamepowerseries} 
the curves $(\phi_p^\pm)^{-1}(\mp a i + \R_-)$ intersects the boundary of $C_p$ 
at angle $\pi/4$ asymptotically as $a\to\infty$ and moreover for $E(p)$ and $E(p')$ in 
the same orbit this happens asymptotically at the same imaginary height. 
Thus possibly decreasing $\eps_0$ and fixing any $\eps, 0 < \eps < \eps_0$ 
we may choose $a_i>0$ (depending on $\eps$) such that for all $i$ and all $p$ with $E(p)$ 
in the orbit of $E(p_i)$ the arcs 
$\gamma_p^\pm = (\phi_p^\pm)^{-1}(\mp a_i\cdot i + \R_-)$ 
exits $X_\eps$ through $\partial S_\eps$ transversally.

Let $\Delta_p^u$ be the closed connected component in $S_\eps$ bounded by 
$\ga_p:=\ga_p^-\cup{p}\cup\ga_p^+$ and containing $C_p$, 
and set $\Delta_p= \Delta_p^u \cup \tau(\Delta_p^u)$. 
Define $\hat X'_\eps= S_\eps \setminus \bigcup_p \Delta_p$ 
(note that $\hat X'_\eps \subset X_\eps$), 
and $\hat X_\eps =  S_\eps \setminus \bigcup_p \Delta_{H(p)}$.

Then by construction $H^{-1}(\hat X_\eps) \subset \hat X'_\eps$ and 
$\overline{H^{-1}(\hat X_\eps)} \subset S_\eps$. 
Thus taking $W := \exp(S_\eps)$, $W' := h^{-1}(W)$ and the multi arc $\ga$ 
as the family $\exp(\ga_p)$, $p\in P$, we have constructed an extension 
$(h, W', W, \ga)$ of $h$ in $\P_d$.
\ENDPROOF

\REFLEM{hdinPd}
The map $h_d$ is {\Mobius} conjugate to a map in $\M_d$, so $h_d\in\P_{d,1}$.
\ENDLEM
\PROOF
For $0<r<1$ define $M_r(z) = (z+r)/(1+rz)$. 
Then $|M_r'(z)|$ is a monotone decreasing function of $\Re(z)$ with 
$|M'_r(-1)| = (1+r)/(1-r)$ and $|M'_r(1)| = (1-r)/(1+r)$. 
Note that for $r= (d-1)/(d+1)$ we have $h_d = M_r(z^d)$. 
Thus $M_r^{-1}\circ h_d \circ M_r = (M_r)^d$ and this map evidently belongs to $\M_{d,1}$.
\ENDPROOF

\noindent\textbf{Proposition~\ref{topgivesqs}:}
\emph{Suppose $h_1, h_2\in\P_d$ are topologically conjugate by an orientation preserving 
homeomorphism $\phi$, which preserves parabolic points. 
Then $\phi$ is quasi-symmetric.} 

\PROOF{}
Let $(h_i, W_i', W_i, \ga^i)$, $i= 1, 2$ be holomorphic extensions 
with $W_i'$ and $W_i$ bounded by $C^1$ Jordan curves intersecting $\ga^i$ transversely.
The case $h_i\in\P_2^1$ is handled in Lomonaco, \cite{L}. 
The general case is completely analogous, we include the details for completeness. 
It suffices to construct a quasi-conformal extension, 
$\phi : \overline{W}_1^+ \to \overline{W}_2^+$, 
with $\phi(\ga^1_p(t)) := \ga_{\phi(p)}^2(t)$ for each $p\in\Par(h_1)$ and 
with $\phi\circ h_1 = h_2\circ\phi$ on $\Omega_1'$.

For each $p\in\Par(h_1)$ extend $\phi$ so that 
$\phi(\ga_p(t)^1) := \ga_{\phi(p)}^2(t)$. 
It is proved in \cite{L} that the arcs $\ga_p^1$ amd $\ga_{\phi(p)}^2$ are quasi-arcs and 
that this extension, which is $C^1$ for $z\not= p$, is quasi-symmetric. 
Next extend $\phi$ as a diffeomorphism 
between the outer boundary of $W_+^1$ and $W_+^2$ respecting the intersections 
with $\ga^i$, 
i.e.~besides being a diffeomorphism it satisfies 
$\phi(\ga_p^1(\pm1))= \ga_{\phi(p)}^2(\pm1)$. 
Then $\phi$ is defined as a quasi-symmetric homeomorphism 
from the quasi-circle boundary of $\Delta_p^1$ 
to the quasi-circle boundary of $\Delta_{\phi(p)}^2$ 
for each $p\in\Par(h_1)$. 
We extend $\phi$ as a quasi-conformal homeomorphism 
$\phi : \Delta_p^1 \to \Delta_{\phi(p)}^2$. 
Next consider the $C^1$ lift 
$\wtphi : \partial W_1'\to\partial W_2'$ 
of $\phi\circ h_1$ to $h_2$ respecting the dividing multi arcs. 
We next extend $\phi$ by $\wtphi$ on $\partial W_1'\cap W_1^+$.
For each $i=1,2$ the connected components of $W_i^+\sm \overline{W}_i'$ 
are quadrilaterals $Q_p^i$ indexed by the $p\in\Par(h_i)$ 
preceding $Q_p^i$ in the counter-clockwise ordering.
Moreover $\phi$ thus defined restricts to a piecewise $C^1$ and hence 
quasi-symmetric homeomorphism from the boundary of $Q_p^1$ to 
the boundary of $Q_{\phi(p)}^2$. 
Extend this boundary homeomorphism to a quasi-conformal homeomorphism 
between $Q_p^1$ and $Q_{\phi(p)}^2$. 

Call the thus extended map $\phi_1$ and its domain and range $U_1^1$ and 
$U_2^1$ respectively. 
Define recursively for $i= 1,2$ and $n\geq 1$:
$$
U_i^{n+1} := U_i^n\cup (h_i^{-1}(U_i^n)\cap\Omega_i')
$$ 
Moreover define recursively $\phi_{n+1}: U_1^{n+1}\to U_2^{n+1}$ 
as the quasi-conformal extension of $\phi_n$, 
which on $h_1^{-1}(U_1^n)\cap\Omega_1'$ satisfies 
$$
\phi_n\circ h_1 = h_2\circ \phi_{n+1}
$$
(i.e.~lift of $\phi_n\circ h_1$ to $h_2$).
Then $\phi_n\cup\phi$ converges uniformly to a quasi-conformal homeomorphism 
$\phi_\infty : \overline{W}^+_1 \to \overline{W}^+_2$, 
which conjugates dynamics except on $\Delta'_1$. 
Thus $\phi$ is the restriction to $\Sen$ of a quasi-conformal homeomorphism 
and thus it is a quasi-symmetric map.
\ENDPROOF
\REFPROP{PdinTdstarcapFd}
$\P_d\subseteq\T_d^*\cap\F_d = \T_d$.
\ENDPROP
\PROOF
Let $h\in\P_d$ and let $(h, W', W, \ga)$ be a degree $d$ holomorphic extension of $h$ 
as a parabolic external map with dividing multi arc $\gamma$ 
and associated sets $\Delta'$ and $\Delta$. 
We shall first redefine $\Om$ and $\Om'$ so as to be $\tau$-symmetric :
$\Omega=W\sm\overline{\Delta\cup\tau(\Delta)}$ 
and $\Omega' = W'\sm\overline{\Delta'\cup\tau(\Delta')}$  
then $\Om'':= h^{-1}(\Om)\subset \Om' \subsetneq\Om$. 
It follows that each $p\in\Par(h)$, say of period $n$, 
is admits the circle as repelling directions. 
Indeed if not then it would have a $\tau$-symmetric attracting petal 
along $\Sen$ to one or both sides. 
However since $\Om''\subset \Om$ the parabolic basin for $h^n$ 
containing such a petal would be a proper basin and thus would contain a critical point.

To prove that all other periodic orbits are repelling, 
let $\rho$ denote the hyperbolic metric on $\Om$.
Then each connected component $V$ of $\Omega''$ is a subset of $U\cap W'$ 
for some connected component $U$ of $\Omega$. 
Thus $h$ is expanding with respect to the conformal metric $\rho$. 
Since any non parabolic orbit is contained in $\Omega''\cap\Sen$ 
it follows that all non parabolic orbits are repelling. 
This proves the first inclusion.
The equality sign is immediate from Corollary~\ref{TdstarequalsTdoneplusBV}.
\ENDPROOF
\textit{Completion of proof of Theorem \ref{th}:}\label{proofofth} 
By Proposition~\ref{MdinPd}, $\M_d\subset\P_d$ (and so $\M_{d,1}\subset\P_{d,1}$), and by Proposition~\ref{PdinTdstarcapFd}, $\P_d \subseteq \T_d$. Since $h_d$ is topologically expanding
we have that $\X_{d,1}\subset\T_{d,1}$, and combining Lemma~\ref{hdinPd} and Proposition~\ref{MdinPd} we obtain $\P_{d,1}\subset\X_{d,1}$. So:
$$
\M_d\subset\P_d\subset\T_d\qquad\textrm{and}\qquad\M_{d,1}\subset\P_{d,1}\subset\X_{d,1}\subset\T_{d,1}
$$
By Theorem~\ref{TdisessMd}, any $h\in\T_d$ is real analytically conjugate to a map $\whh\in\M_d$, and so by Proposition~\ref{MdinPd} we also have $h\in\P_d$.
So we obtain 
$$
\P_d=\T_d \qquad\textrm{and}\qquad \pi_d(\M_d)=\pi_d(\P_d) =\pi_d(\T_d).
$$
%

\subsection{Proof of Lemma \ref{almostsamepowerseries}}\label{proofoflemma}
This section is completely devoted to proving Lemma \ref{almostsamepowerseries}.
Let us start by noticing that it follows from the definition of $\M_d$ 
that $h$ only has finitely many parabolic points. 
The proof of Lemma \ref{almostsamepowerseries} uses the idea of the proof of \corref{mainBV} to recursively 
construct conjugacies to maps 
which full-fills the requirements of $\chH$ to higher and higher orders. 
It turns out that after two steps of the recursion 
we arrive at the desired map $\chH$ and obtain the conjugacy as the composition 
of the pair of conjugacies from the recursion.

The recursion is given by the following procedure: \\
Let $h\in\M_d$ be arbitrary, let $N = N_h$ denote the least common multiple 
of the periods of parabolic orbits for $h$ 
and let $L:= (d^N-1)/(d-1)$. 
Define a real analytic diffeomorphism {\mapfromto {\phi} \R \R} and 
a new real analytic diffeomorphism $\whH$ (lift of degree $d$ covering $\whh$) 
as follows: 
$$
\phi(x) := \frac{1}{L}\sum_{k=0}^{N-1} (H^k)(x)
\qquad\textrm{and}\qquad
\whH := \phi\circ H \circ \phi^{-1}.
$$
Then $\phi(x+1) = 1 + \phi(x)$,
$\Par(\whH) = \phi(\Par(H))$, $N_\whh = N_h$ and
\begin{align}\label{whHinMd}
\whH'(\phi(x)) &= 
\frac{\phi'(H(x))\cdot H'(x)}{\phi'(x)} 
= \frac{H'(x)\sum_{k=0}^{N-1} (H^k)'(H(x))}{\sum_{k=0}^{N-1} (H^k)'(x)}\nonumber\\
&= \frac{\sum_{k=1}^{N} (H^k)'(x)}{\sum_{k=0}^{N-1} (H^k)'(x)} 
= \frac{(H^N)'(x) + \sum_{k=1}^{N-1} (H^k)'(x)}{1+\sum_{k=1}^{N-1} (H^k)'(x)} 
\geq 1
\end{align}
with equality if and only if $x\in\Par(H)$, thus $\whH\in\M_d$.

For $p\in\Par(H)$ with period $s$, 
set $\whp := \phi(p)\in\Par(\whH)$, 
$p_k := H^k(p)$, $\whp_k = \phi(p_k) = \whH^k(\whp)$, then $p_{s+k}-p_k = \whp_{s+k}-\whp_k\in\Z$ for each $k \geq 0$.
Let $2n>0$ denote the common parabolic degeneracy. 
A priori the power series developments (Taylor expansions) 
of $H$ around the points $p_k$ 
could have non-linear terms of order less than $2n+1$. 
However, since $h\in\M_d$ the leading non-linear term must be of odd order, say $2m+1$ (and 
have positive coefficient), and Claim \ref{goodformula} implies $m=n$.

Write $h_0:= h$, $H_0 := H$ and $\phi_0:= \phi$. Set $H_1 := \whH$, and define 
$$
\phi_1(x) := \frac{1}{L}\sum_{k=0}^{N-1} (H_1^k)(x)
\qquad\textrm{and}\qquad
H_2 := \phi_1\circ H_1 \circ \phi_1^{-1}.
$$
Then $\phi := \phi_1\circ\phi_0$ and 
$\chH := H_2 $ satisfy the Lemma, with $\chP := \whP+x^{2n}\whR$, where $\whP$ and $\whR$ are given by Claim \ref{doublygoodformula} applied to $H_1$.
\subsubsection{Claim \ref{goodformula} and Claim \ref{doublygoodformula}}
\REFCLAIM{goodformula}
Suppose that for some $m>0$ the Taylor expansions of $H$ around 
the points $p_k$ take the form
$$
H(x) = p_{k+1} + (x - p_k)(1 + (x - p_k)^{2m}\cdot P_k(x - p_k) + \OO(x - p_k)^{4m}),
$$ 
where $P_k$ is a polynomial of degree at most $(2m-1)$, 
$P_{s+k}=P_k$ for $k\geq 0$ 
and where $P_k(0) > 0$ for at least one $k, 0\leq k < s$. 
Then for each $k$ the Taylor approximation to order $4m$ of $\whH$ 
at $\whp_k$ takes the form
\REFEQN{orderfourm}
\whH(\whx) = \whp_{k+1} + (\whx - \whp_k)
(1 + (\whx - \whp_k)^{2m}\cdot \whP(\whx - \whp_k) + \OO(\whx - \whp_k)^{4m}),
\ENDEQN
where
\REFEQN{sumpoly}
\whP(x) := \frac{L^{2m}}{s}  \cdot \sum_{k=0}^{s-1} P_k(Lx)
\ENDEQN
is independent of $k \geq 0$ and moreover for $\whx$ close to $\whp_k$ and $j\geq 1$:
\REFEQN{iterationformula}
\whH^j(\whx) = \whp_{j+k} + (\whx-\whp_k)(1+j\cdot(\whx-\whp_k)^{2m}\cdot 
\whP(\whx-\whp_k) + \OO(\whx-\whp)^{4m}).
\ENDEQN
\ENDCLAIM
Let us first see that the Claim implies $m=n$. 
Since $H$ and $\whH$ are analytically conjugate the parabolic degeneracy 
of $\whH$ at $\whp$ is also $2n$. 
However since the coefficient of the leading terms in \eqref{orderfourm} 
are non-negative and at least one of them is positive, 
it follows from \eqref{sumpoly} that the constant term of $\whP$ is positive 
and then from \eqref{iterationformula} that the degeneracy is $2m$. Therefore $m=n$.
\begin{proof}
Towards a proof of the Claim a routine computation and induction shows that 
for all $j\geq 0$ the Taylor series of $H^j$ to order $4m$ at $p_k$ is given by:
$$
H^j(x) = H^j(p_k) + (x - p_k)(1 + (x - p_k)^{2m}\cdot 
\sum_{l=0}^{j-1} P_{l+k}(x - p_k) + \OO(x - p_k)^{4m})
$$
and thus with $Q_k := (2m+1) P_k + x\cdot P'_k = Q_{s+k}$
$$
(H^j)'(x) = 1 + (x - p_k)^{2m}\cdot \sum_{l=0}^{j-1} Q_{l+k}(x - p_k) 
+ \OO(x - p_k)^{4m}.
$$
Continuing to compute $\whH'(\phi(x))$ for $x$ near $p_k$ 
starting from the first term of \eqref{whHinMd}
and using $(H^0)' \equiv 1$, $(H^j)'-1 = \OO((x-p_k)^{2m})$ 
we find
\begin{align}\label{whHprimeofphione}
\whH'(\phi(x)) &= 
\frac{\sum_{j=1}^{N} (H^j)'(x)}{\sum_{j=0}^{N-1} (H^j)'(x)}
= \frac{N + \sum_{j=1}^{N} ((H^j)'(x)-1)}{N+\sum_{j=1}^{N-1} ((H^j)'(x)-1)}\\
&= \left(1 + {\textstyle\frac{1}{N}}\sum_{j=1}^{N} ((H^j)'(x)-1)\right)
\left(1- {\textstyle\frac{1}{N}}\sum_{j=1}^{N-1} ((H^j)'(x)-1)\right) 
+ \OO(x - p_k)^{4m}\nonumber\\
&= 1 + {\textstyle\frac{1}{N}} ((H^N)'(x)-1) + \OO(x - p_k)^{4m}\nonumber\\
&= 1 + \frac{(x - p_k)^{2m}}{N}\cdot  \sum_{l=0}^{N-1} Q_{l+k}(x - p_k) 
+ \OO(x - p_k)^{4m}\nonumber
\end{align}
From the formula for $\phi$ we find the expansion of $\phi$ to order $2m$ 
at $p_k$ :
$$
\phi(x) = \whp_k+ \frac{1}{L}(x - p_k)(1 + \OO(x - p_k)^{2m})
$$
so that the expansion for $\phi^{-1}$ to order $2m$ at $\whp_k$ is:
$$
\phi^{-1}(\whx) = p_k+ L(\whx - \whp_k)(1 + \OO(\whx - \whp_k)^{2m})
$$
and thus the expansion for $\whH'$ to order $(4m-1)$ at $\whp_k$ is: 
$$
\whH'(\whx) = 1 + \frac{(L(\whx - \whp_k))^{2m}}{N}
\cdot \sum_{l=0}^{N-1} Q_{l+k}(L(\whx - \whp_k)) 
+ \OO(\whx - \whp_j)^{4m}.
$$
So by integration from $\whp_k$ we find 
$$
\whH(\whx) = \whp_{k+1} + (\whx - \whp_k)
(1 + (\whx - \whp_k)^{2m}\frac{L^{2m}}{N}
\cdot \sum_{l=0}^{N-1} P_{l+k}(L(\whx - \whp_k)) 
+ \OO(\whx - \whp_k)^{4m}),
$$
from which the Claim follows, since $N$ is a multiple of $s$ and the 
terms of the sum are repeated $N/s$ times. 
\end{proof}
\REFCLAIM{doublygoodformula}
Suppose the Taylor expansions of $H$ around 
the points $p_k$ take the form
$$
H(x) = p_{k+1} + (x - p_k)(1 + (x - p_k)^{2n}\cdot P(x - p_k) + 
(x - p_k)^{4n}\cdot R_k(x - p_k) + \OO(x - p_k)^{6n}),
$$ 
where $P$ and $R_k$ are polynomials of degree at most $(2n-1)$, 
$P$ with $P(0) > 0$ is independent of $k$ and $R_{s+k}=R_k$ for $k\geq 0$. 
Then for each $k$ the Taylor expansion of $\whH$ to order $6n$ 
at $\whp_k$ takes the form
\REFEQN{ordersixn}
\whH(\whx) = \whp_{k+1} + 
(\whx - \whp_k)(1 + (\whx - \whp_k)^{2n}\cdot \whP(\whx - \whp_k) +
(\whx - \whp_k)^{4n}\cdot \whR(\whx - \whp_k) + \OO(\whx - \whp_k)^{6n}),
\ENDEQN
where $\whR$ and $\whP(x) = L^{2n}P(Lx)$ with $\whP(0)>0$ 
are polynomials of degree at most $2n-1$ 
and are independent of the point in the orbit of $\whp = \phi(p)$.
\ENDCLAIM
\begin{proof}
 The proof of this Claim is similar to the proof of the first Claim, 
and we only indicate the differences.\\
For proving a formula for the $j$-th iterate the following formula is simple and useful
\REFEQN{PofHformula}
P(x(1+x^{2n}P(x))) = P(x) + x^{2n}\cdot x\cdot P'(x)\cdot P(x) + \OO(x)^{4n}
\ENDEQN
(Note that the term $x^{2n}\cdot x\cdot P'(x)\cdot P(x)$ 
contains terms of order larger than or equal $4n$, but taking them out only complicates the formula.)

By induction for each $j \geq 1$ and $x$ close to $p_k$ we find 
\begin{align*}
H^j(x) = ~&p_{j+k} + (x - p_k)(1 + (x - p_k)^{2n}\cdot j\cdot P(x - p_k)\\
&\quad+(x - p_k)^{4n}\cdot\frac{j(j-1)}{2}((2n+1)(P(x - p_k))^2\\ 
&\qquad+ (x - p_k)P'(x - p_k)P(x - p_k))\\
&\quad\qquad+(x - p_k)^{4n}\cdot\sum_{l=0}^{j-1} R_{l+k}(x - p_k) + \OO(x - p_k)^{6n})\\
= ~&p_{j+k} + F_j(x- p_k) + (x - p_k)^{4n}\cdot\sum_{l=0}^{j-1} R_{l+k}(x - p_k) + \OO(x - p_k)^{6n})
\end{align*}
where 
\begin{align*}
F_j(x) := ~&x(1 + x^{2n}\cdot j\cdot P(x)\\
&+ x^{4n}\cdot\frac{j(j-1)}{2}((2n+1)(P(x))^2 
+ xP'(x)P(x)))
\end{align*}
is independent of $k$, i.e.~independent of the starting point in the orbit of $p$.
As above define $Q$ by the formula
$x^{2n}Q(x) := \frac{\d}{\dx}(x^{2n+1}P(x))$, and thus $Q(x) = (2n+1) P + x\cdot P'$,
and $S_k$ by the formula
$x^{4n}S_k(x) := \frac{\d}{\dx}(x^{4n+1}R_k(x))$, and thus $S_k(x) = (4n+1) R_k + x\cdot R_k' = S_{s+k}(x)$. 
Then 
\begin{align*}
(H^j))'(x) ~&= F_j'(x - p_k) + (x - p_k)^{4n}\cdot \sum_{l=0}^{j-1} S_{l+k}(x - p_k) 
+ \OO(x - p_k)^{6n}\\
~&= 1 + j (x - p_k)^{2n}Q(x - p_k) + \OO(x - p_k)^{4n}
\end{align*}
Thus
\begin{align}\label{Hjprimetwo}
\sum_{j=1}^{N-1} ((H^j)'(x)-1) &= \sum_{j=1}^{N-1} 
j (x - p_k)^{2n}Q(x - p_k) + \OO(x - p_k)^{4n}\nonumber\\
&= {\textstyle\frac{N(N-1)}{2}}(x - p_k)^{2n}Q(x - p_k) + \OO(x - p_k)^{4n}
\end{align}
Computing $\whH'(\phi(x))$ from the second formula in \eqref{whHprimeofphione} 
we obtain
\begin{align*}
\whH'(\phi(x)) &= 
\frac{N + \sum_{j=1}^{N} ((H^j)'(x)-1)}{N+\sum_{j=1}^{N-1} ((H^j)'(x)-1)}\\
&= \left(1 + {\textstyle\frac{1}{N}}\sum_{j=1}^{N} ((H^j)'(x)-1)\right)\cdot\\
&\qquad\left(1- {\textstyle\frac{1}{N}}\sum_{j=1}^{N-1} ((H^j)'(x)-1)+ 
{\textstyle\frac{(N-1)^2}{4}}(x - p_k)^{4n}(Q(x - p_k))^2 \right)\\ 
&\qquad\qquad\qquad\qquad\qquad\qquad\qquad\qquad\qquad\qquad\qquad\qquad
+ \OO(x - p_k)^{6n}\\
&= 1 + {\textstyle\frac{1}{N}} ((H^N)'(x)-1) + 
{\textstyle\frac{(N-1)^2-(N^2-1)}{4}}(x - p_k)^{4n}(Q(x - p_k))^2\\ 
&\qquad\qquad\qquad\qquad\qquad\qquad\qquad\qquad\qquad\qquad\qquad\qquad
+\OO(x - p_k)^{6n}\\
&= 1+ {\textstyle\frac{1}{N}}(F_N'(x-p_k) - 1) + 
\frac{(x - p_k)^{4n}}{N}\cdot  \sum_{l=0}^{N-1} S_{l+k}(x - p_k)\\ 
&\qquad\qquad\qquad\qquad\qquad\qquad
-{\textstyle\frac{1}{2}}(x - p_k)^{4n}(Q(x - p_k))^2 + \OO(x - p_k)^{6n}\\
\end{align*}
That is the terms of $\whH'(\phi(x))$ depending on $k$ are the terms
$$
\frac{(x - p_k)^{4n}}{N}\cdot  \sum_{l=0}^{N-1} S_{l+k}(x - p_k) + \OO(x - p_k)^{6n}
$$
of order at least $4n$.

From the definition of $\phi$ and \eqref{Hjprimetwo} we see that 
$\phi$ is independent of $k$ to order $4n$ and thus the same holds for 
$\phi^{-1}$. Combining this with the above shows that $\whH'$ is independent 
of $k$ to order $6n-1$ and thus $\whH$  is independent 
of $k$ up to and including order $6n$, as promised by the Claim.
\end{proof}

\end{document}